\newtheorem{theorem}{Theorem}[section]
\newtheorem{lemma}[theorem]{Lemma}
\newtheorem{prop}[theorem]{Proposition}
\newtheorem{corollary}[theorem]{Corollary}
\theoremstyle{definition}
\theoremstyle{remark}
\numberwithin{equation}{section}
\renewcommand{\qq}{\mathbb Q}
\newcommand{\zz}{\mathbb{Z}}
\newcommand{\Q}{\mathcal Q}
\newcommand{\GL}{\operatorname{GL}}
\newcommand{\Conceicao}{Concei{\c c}\~ao}
\newcommand{\ovx}{\tilde{x}}
\newcommand{\ovy}{\tilde{y}}
\newcommand{\ovz}{\tilde{z}}
\newcommand{\ovQ}{\tilde{Q}}
\newcommand{\ovf}{\tilde{f}}
\newcommand{\PPT}{SPT}
\newcommand{\SPT}{SPT}
\begin{document}

\subjclass[2020]{Primary: 11G35, Secondary: 11T06}


\bibliographystyle{amsplain}

\title{A polynomial analogue of Berggren's theorem on Pythagorean triples}

\author{Byungchul Cha}
\address{Department of Mathematics, 2400 W Chew st., Allentown, PA 18104}
\email{cha@muhlenberg.edu}

\author[\Conceicao]{Ricardo \Conceicao}
\address{Department of Mathematics, 300 N Washington St, Gettysburg, PA 17325}
\email{rconceic@gettysburg.edu}

\commby{}
\maketitle
\begin{abstract}
  Say that $(x, y, z)$ is a positive primitive integral Pythagorean triple if $x, y, z$ are positive integers without common factors satisfying $x^2 + y^2 = z^2$.
  An old theorem of Berggren gives three integral invertible linear transformations whose semi-group actions on $(3, 4, 5)$ and $(4, 3, 5)$ generate all positive primitive Pythagorean triples in a unique manner.
  We establish an analogue of Berggren's theorem in the context of a one-variable polynomial ring over a field of characteristic $\neq 2$.
  As its corollaries, we obtain some structure theorems regarding the orthogonal group 
  with respect to the Pythagorean quadratic form
  over the polynomial ring.
\end{abstract}

\section{Introduction}
A triple $(x,y,z)\in \zz^3$ is an \emph{integral Pythagorean triple} if it satisfies
\begin{equation}\label{eq:pyt}
 x^2+y^2=z^2.
\end{equation}
It is said to be \emph{primitive} if $\gcd(x,y,z)=1$ and \emph{positive} if $x,y,z>0$.

An old theorem of Berggren \cite{Ber34}, rediscovered independently first by \cite{Bar63} and  later by several other authors\footnote{See the introduction of \cite{Rom08} for a more comprehensive list.}, says that every positive primitive  integral Pythagorean triple can be generated from the well-known integral Pythagorean triple $(3,4,5)$ using four linear transformations, one of which is the permutation of $x$ and $y$. More precisely, if we define
\begin{equation}
  N_1
  =
  \begin{pmatrix}
    1 & -2 & 2 \\
    2 & -1 & 2 \\
    2 & -2 & 3
  \end{pmatrix},
  \quad
  N_2 =
  \begin{pmatrix}
 1 & 2 & 2 \\
 2 & 1 & 2\\
 2 & 2 & 3
 \end{pmatrix},
 \quad
  N_3 =
  \begin{pmatrix}
 -1 & 2 & 2 \\
 -2 & 1 & 2\\
 -2 & 2 & 3
 \end{pmatrix}
 \label{eq:Berggren_matrices}
\end{equation}
then
\begin{theorem}[Berggren's theorem]\label{thm:BerggrenThm}
  Let  $(x, y, z)$ be  a positive primitive  integral Pythagorean triple. Then there exists a unique sequence $\{d_1, \dots, d_k\}\in \{ 1, 2, 3 \}^k$  such that
\[
 \begin{pmatrix}
  x \\ y \\ z
 \end{pmatrix}
 =
 N_{d_1} \cdots N_{d_k}
 \begin{pmatrix}
   3 \\ 4 \\ 5
 \end{pmatrix}
 \text{ or }
 \begin{pmatrix}
  x \\ y \\ z
 \end{pmatrix}
 =
 N_{d_1} \cdots N_{d_k}
 \begin{pmatrix}
   4 \\ 3 \\ 5
 \end{pmatrix}.
\]
(Here, $\{d_1, \dots, d_k\}$ is understood to be an empty sequence if $(x, y, z) = (3, 4, 5)$ or $(4, 3, 5)$.)
\end{theorem}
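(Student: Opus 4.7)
The plan is to proceed by infinite descent on the hypotenuse $z$. I would first verify by direct matrix computation that each $N_i$ preserves the quadratic form $q(x,y,z) = x^2 + y^2 - z^2$ and has $\det N_i = \pm 1$; the Pythagorean condition is then preserved because $N_i$ lies in the orthogonal group of $q$, primitivity because $N_i^{\pm 1}$ has integer entries, and positivity by an elementary sign check on each $N_i(x,y,z)$ using $z > \max(x,y)$. Computing the inverses shows that the $z$-coordinate of $N_i^{-1}(x,y,z)$ equals $3z - 2x - 2y$ for each $i$, which is strictly less than $z$ whenever $z < x+y$; the latter always holds for positive Pythagorean triples since $z^2 = x^2 + y^2 < (x+y)^2$.

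The core of the argument is to show that, for any positive primitive Pythagorean triple $(x,y,z)$ other than the two base triples, exactly one of $N_1^{-1}, N_2^{-1}, N_3^{-1}$ produces another positive triple. Inspecting the first two entries of the three inverses shows that $N_1^{-1}$ yields a positive triple iff $2x + y < 2z < x + 2y$, $N_2^{-1}$ iff both $x + 2y > 2z$ and $2x + y > 2z$, and $N_3^{-1}$ iff $x + 2y < 2z < 2x + y$; these three conditions are pairwise incompatible. To show they are jointly exhaustive up to boundary, one must rule out the possibility that $x + 2y \le 2z$ and $2x + y \le 2z$ simultaneously: squaring each inequality and substituting $x^2 + y^2 = z^2$ reduces them respectively to $4y \le 3x$ and $4x \le 3y$, which chain to $16y \le 9y$, impossible for $y > 0$. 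The boundary cases $x + 2y = 2z$ and $2x + y = 2z$ can be solved explicitly, using $x^2 + y^2 = z^2$ and primitivity, to yield precisely $(4,3,5)$ and $(3,4,5)$ respectively, confirming these as the only stopping points of the descent.

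Existence of the sequence $\{d_1, \dots, d_k\}$ then follows by induction on $z$, and uniqueness follows because the region in which $(x,y,z)$ lies deterministically selects both the last letter $d_k$ and the reduced triple. Disjointness of the two orbits (rooted at $(3,4,5)$ and $(4,3,5)$) is immediate from the observation that each $N_i$ preserves the parity of the first coordinate $x$, so a triple with $x$ odd cannot be obtained from one with $x$ even. I expect the main obstacle to be the sharp exclusion of the simultaneous non-strict case $x+2y\le 2z,\ 2x+y\le 2z$, which uses the Pythagorean relation nonlinearly rather than via a soft convex bound; once that is in hand, the remainder is careful sign bookkeeping and a routine induction.
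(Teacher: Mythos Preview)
The paper does not actually prove Theorem~\ref{thm:BerggrenThm}. Berggren's theorem is stated in the introduction as a classical result, attributed to \cite{Ber34} (and its rediscoveries), and serves only as motivation for the polynomial analogue in Theorem~\ref{thm:main_theorem_tree}. All of the paper's proofs concern the $K[t]$ setting. There is therefore nothing in the paper to compare your argument against line by line.

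That said, your outline is the standard descent proof and is essentially correct. A couple of small points you should make explicit when writing it out. First, your trichotomy on the regions only controls the signs of the first two coordinates of $N_i^{-1}(x,y,z)$; you should also note that the common third coordinate $3z-2x-2y$ is positive, which follows from $9z^2-4(x+y)^2=5x^2-8xy+5y^2>0$ (a positive-definite form). Second, in the squaring step you should remark that both sides are positive, so squaring preserves the inequality. With those additions the argument is complete.

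It is worth observing that your proof is the integer counterpart of what the paper does for $K[t]$: the descent on the hypotenuse $z$ corresponds to the descent on the height $h(Q)$ in Proposition~\ref{prop:descent}, and your identification of the terminal triples $(3,4,5)$ and $(4,3,5)$ via the boundary equations $2x+y=2z$, $x+2y=2z$ plays the role of Proposition~\ref{prop:degz2degx}, which isolates the base triples $cS_f$. The main structural difference is that over $K[t]$ there is a single descent move $M_f^{-1}$ parametrized by a polynomial $f$ (determined by Euclidean division of $z$ by $x$), whereas over $\zz$ the absence of a sign-canonical quotient forces the three branches $N_1,N_2,N_3$. Your parity argument for disjointness of the two rooted trees has no analogue in the paper because the polynomial version has infinitely many roots $S_f$ rather than two.
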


Berggren's theorem has been generalized to Pythagorean forms in more than two variables \cite{CA90} and certain indefinite binary quadratic forms \cite{CNT}. In both cases, it is shown that all  primitive integral tuples are generated from a finite set of primitive tuples and a finite number of linear transformations. The main result of this paper, Theorem \ref{thm:main_theorem_tree} below, is an   analogue of Berggren's theorem for polynomial rings over a field. Similarly to the results in \cite{CA90} and  \cite{CNT}, it describes how all primitive polynomial Pythagorean triples can be generated from a single polynomial Pythagorean triple using linear transformations and composition of polynomials. The remaining of this introduction is used to make this statement more precise by finding analogues over $K[t]$ of the notions of positive primitive integral Pythagorean triples, the triple $(3,4,5)$ and the matrices in  \eqref{eq:Berggren_matrices}. We also present applications of  the polynomial version of Berggren's Theorem to  the group of linear automorphisms of  \eqref{eq:pyt} over a polynomial ring.

Let $K$ be a field of  characteristic $\neq 2$ and $K[t]$ be the ring of polynomials over $K$ in the indeterminate $t$. A non-zero triple $(x,y,z)\in K[t]^3$  satisfying \eqref{eq:pyt}  is called a \emph{(polynomial) Pythagorean triple}. As before,  $(x,y,z)$ is said to be primitive if $\gcd(x,y,z)=1$\footnote{Recall that $\gcd(x, y, z)$ is by definition the unique monic polynomial in $K[t]$ that generates the smallest ideal of $K[t]$ containing $x, y, z$.}.  The analogue of a positive primitive integral Pythagorean triple is a primitive Pythagorean triple $(x, y, z) \in K[t]^3$ such that $\deg x<\deg y = \deg z$ and the leading coefficients of $y$ and $z$ are the same. We call them \emph{standard Pythagorean triples}  or \SPT\ in short.
As in the classical case, any non-standard primitive Pythagorean triple can be brought to a standard one by means of a $K$-linear coordinate change (cf.~Lemma~\ref{lem:non_SPT_Rf}). Moreover, it follows from the primality condition that all \SPT's $(x,y,z)$ with $x=0$ are of the form 
$(0,c,c)$,
for some $c\in K^*$. Therefore, we restrict ourselves to the study of \SPT's with $x\neq0$.

To find an analogue to $(3,4,5)$, we observe that 
$$
 S_t=(2t,t^2-1,t^2+1),
$$
which comes from the classical  rational parametrization of the unit circle,
yields an SPT of smallest height (see  definition of height in \S\ref{sec:Prelim}). But, because \eqref{eq:pyt} is defined over $K$, new \SPT's can be created from other \SPT's by replacing $t$  with a polynomial $f\in K[t]\backslash K$. In particular,  
\begin{equation}\label{eq:St}
S_f=(2f,f^2-1,f^2+1)
 \end{equation}
 is also an \SPT, which we take to be the natural analogue over $K[t]$ of the triple $(3,4,5)$. 

As for the   linear transformations \eqref{eq:Berggren_matrices} appearing in Berggren's theorem, in \S\ref{sec:Prelim}
we explain how they can be constructed from reflections on a quadratic space defined by 
the quadratic form $\Q(x, y, z) = x^2 + y^2 - z^2 $ associated to \eqref{eq:pyt}. When this construction is applied to reflections defined by a polynomial $f\in K[t]$, we arrive at the 
matrix
$$
 M_f=\left(\begin{array}{rrr}
-1 & 2 f & 2 f \\
-2 f & 2 f^{2} - 1 & 2 f^{2} \\
-2 f & 2 f^{2} & 2 f^{2} + 1
\end{array}\right).
$$

%
%

This is the final piece needed to state the following analogue of Berggren's theorem, which is proved in  \S\ref{sec:Tree_structure_SPT}.
\begin{theorem}\label{thm:main_theorem_tree}
Let $Q=(x,y,z)$ be an SPT with $x\neq 0$. 
Then there exist  $c\in K^*$, $f\in K[t]\backslash K$, and a (possibly empty) sequence  $\{f_1, \dots f_k \}$ in $K[t] \backslash K$ such that
\[
Q^T = cM_{f_1} \cdots M_{f_k} S_f^T.
\]
Moreover, this representation of $Q$ is unique.
\end{theorem}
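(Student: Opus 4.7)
The plan is a Euclidean-algorithm style descent using the classical two-parameter parametrization of Pythagorean triples. I would first show that any \SPT\ $Q=(x,y,z)$ with $x\neq 0$ can be written as
\[
Q^T = c\cdot(2AB,\,A^2-B^2,\,A^2+B^2)^T
\]
for some $c\in K^*$ and coprime $A,B\in K[t]$ with $\deg A>\deg B$. This comes from $x^2=(z-y)(z+y)$, the coprimality $\gcd(z-y,z+y)=1$ (a consequence of primitivity and $\operatorname{char} K\neq 2$), and unique factorization in $K[t]$. The pair $(A,B)$ is unique up to the rescaling $(A,B,c)\mapsto(\lambda A,\lambda B,c/\lambda^{2})$ for $\lambda\in K^*$.

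The crux is the identity
\[
M_h\cdot (2AB,\,A^2-B^2,\,A^2+B^2)^T = (2A(2hA-B),\,(2hA-B)^2-A^2,\,(2hA-B)^2+A^2)^T,
\]
verified by direct matrix computation. Equivalently, $M_h$ acts on the parametrizing pair by $(A,B)\mapsto(2hA-B,\,A)$ and $M_h^{-1}$ by $(A,B)\mapsto(B,\,2hB-A)$, mimicking one step of the Euclidean algorithm applied to $A$ and $2B$.

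For existence I would induct on $\deg B$. When $\deg B=0$ we have $B\in K^*$, and the displayed triple equals $(cB^{2})\,S_{A/B}^T$; setting $f:=A/B\in K[t]\setminus K$ (nonconstant because $Q$ is an \SPT) gives $Q=(cB^{2})\,S_{f}$ with empty sequence. When $\deg B\geq 1$, Euclidean division of $A$ by $2B$ (valid since $\operatorname{char} K\neq 2$) yields a unique $f_1\in K[t]$ and remainder $r$ with $A=2f_1B+r$ and $\deg r<\deg B$; the nonvanishing $r\neq 0$ follows from $\gcd(A,B)=1$ together with $\deg B\geq 1$, while $\deg A>\deg B$ forces $\deg f_1\geq 1$, so $f_1\in K[t]\setminus K$. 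The triple $M_{f_1}^{-1}Q$ then has parametrizing pair $(B,-r)$, with strictly smaller value of $\deg B$, so the inductive hypothesis finishes the step.

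For uniqueness, the outermost matrix $M_{f_1}$ is forced by $Q$. In any representation with $k\geq 1$, the triple $M_{f_1}^{-1}Q$ must itself be an \SPT\ with $x\neq 0$; via the parametrization identity this requires $\deg(A-2f_1 B)<\deg B$, uniquely pinning $f_1$ down as the Euclidean quotient of $A$ by $2B$ (a datum that is invariant under the rescaling ambiguity in $(A,B)$). Induction on $k$ then handles the remaining factors, and the base case $cS_f=c'S_{f'}\Rightarrow(c,f)=(c',f')$ follows by comparing the three coordinates. To exclude $k\geq 1$ when $Q=cS_f$ itself, note that the parametrization then has $\deg B=0$, so $M_{f_1}^{-1}Q$ reparametrizes as $c\cdot(2(2f_1-f),\,1-(2f_1-f)^2,\,1+(2f_1-f)^2)^T$; whenever $2f_1-f$ is nonconstant the middle and last entries have opposite leading coefficients, while otherwise the triple is either constant or has vanishing first entry, so $M_{f_1}^{-1}Q$ is never an \SPT\ with $x\neq 0$. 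This contradicts the fact that each intermediate triple appearing in the product must be an \SPT\ with $x\neq 0$. The principal technical point is thus the matrix-parametrization identity; once established, both existence and uniqueness reduce to the standard termination and uniqueness of polynomial division in $K[t]$.
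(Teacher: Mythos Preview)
Your proof is correct and takes a genuinely different route from the paper's. The paper argues directly with the coordinates $(x,y,z)$: it performs a descent on $h(Q)=\deg z$, choosing $f$ as the Euclidean quotient of $z$ by $x$ (so $z=fx+k$ with $\deg k<\deg x$), and shows by explicit degree estimates on $d=z-y$ that $M_f^{-1}Q$ is again an \SPT\ of smaller height unless $\deg z=2\deg x$, in which case a separate computation gives $Q=cS_f$. Uniqueness is obtained from a cancellation lemma (if $M_gP=M_hQ$ for \SPT's then $g=h$ and $P=Q$), proven by a somewhat ad hoc case analysis on the ``types'' of non-\SPT\ Pythagorean triples.

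You instead pass once and for all to the classical parametrization $Q=c\,(2AB,A^2-B^2,A^2+B^2)$ and exploit the single identity that $M_h$ acts on the pair $(A,B)$ by $(A,B)\mapsto(2hA-B,A)$. After that, both existence and uniqueness reduce transparently to termination and uniqueness of the polynomial Euclidean algorithm on $(A,2B)$. The two approaches in fact produce the same sequence $f_1,\dots,f_k$ (your quotient of $A$ by $2B$ coincides with the paper's quotient of $z$ by $x$, since $z/x=A/(2B)+B/(2A)$ and the second term has negative degree), but your packaging is more conceptual: it makes clear \emph{why} the descent terminates and why the outermost $f_1$ is forced, whereas the paper verifies these by hand. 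The price you pay is the initial step of establishing the $(A,B)$ parametrization over an arbitrary field (the point that $(z-y)(z+y)=x^2$ with $\gcd(z-y,z+y)=1$ forces each factor to be a unit times a square, and that the two units have square product), which the paper avoids entirely. One small point worth making explicit in your write-up: the claim that ``$M_{f_1}^{-1}Q$ must itself be an \SPT\ with $x\neq0$'' relies on knowing that every partial product $cM_{f_j}\cdots M_{f_k}S_f$ is an \SPT\ with nonzero first coordinate; this follows immediately from your identity (since $\deg(2hA-B)>\deg A$ when $h\notin K$), but deserves a sentence.
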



As consequences of this polynomial Berggren's theorem, we obtain the following two  theorems on the orthogonal group $O_{\Q}(K[t])$ of the quadratic form $\Q$.

\begin{theorem}\label{thm:transitivity_action}
 The group $O_{\Q}(K[t])$ acts transitively on the set of all primitive Pythagorean triples.
\end{theorem}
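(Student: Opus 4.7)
The plan is to show that every primitive Pythagorean triple lies in the $O_{\Q}(K[t])$-orbit of the base triple $S_t = (2t, t^2 - 1, t^2 + 1)$. By Lemma~\ref{lem:non_SPT_Rf}, every non-\SPT{} primitive Pythagorean triple can be brought to an \SPT{} by an element of $O_{\Q}(K) \subseteq O_{\Q}(K[t])$, so it suffices to treat \SPT{}'s, which split into two classes: the degenerate triples $(0, c, c)$ with $c \in K^*$, and those with $x \neq 0$.

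Two explicit ingredients carry the proof. First, for each $c \in K^*$ we build the hyperbolic rotation
\[
A_c = \begin{pmatrix} 1 & 0 & 0 \\ 0 & \tfrac{c^2+1}{2c} & \tfrac{c^2-1}{2c} \\ 0 & \tfrac{c^2-1}{2c} & \tfrac{c^2+1}{2c} \end{pmatrix},
\]
whose entries lie in $K$ because characteristic $\neq 2$ allows inverting $2$; the identity $(c^2+1)^2 - (c^2-1)^2 = 4c^2$ places $A_c$ in $O_{\Q}(K)$, and by inspection it sends $(0, 1, 1)$ to $(0, c, c)$. Second, a direct computation gives $M_g (0, 1, 1)^T = (4g,\, 4g^2 - 1,\, 4g^2 + 1)^T = S_{2g}^T$ for every $g \in K[t]$; with $g = f/2$ this yields $M_{f/2} (0, 1, 1)^T = S_f^T$, and in particular $M_{t/2}(0, 1, 1)^T = S_t^T$.

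With these in hand the two \SPT{} cases close up. For an \SPT{} $Q$ with $x \neq 0$, Theorem~\ref{thm:main_theorem_tree} writes $Q^T = cM_{f_1}\cdots M_{f_k} S_f^T$, and the leading scalar is absorbed via the chain $cS_f^T = M_{f/2}(0, c, c)^T = M_{f/2} A_c (0, 1, 1)^T$ (using $K[t]$-linearity of $M_{f/2}$), producing
\[
Q^T = M_{f_1}\cdots M_{f_k} M_{f/2} A_c M_{t/2}^{-1} S_t^T
\]
with every factor in $O_{\Q}(K[t])$; for a degenerate triple $(0, c, c)$ one simply has $(0, c, c)^T = A_c M_{t/2}^{-1} S_t^T$. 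The main obstacle is absorbing the leading scalar $c$ produced by Theorem~\ref{thm:main_theorem_tree}---scalar multiplication by $c$ is not itself an orthogonal operation unless $c = \pm 1$---and simultaneously bridging the degenerate case $x = 0$; both difficulties are resolved uniformly by the single matrix $A_c$.
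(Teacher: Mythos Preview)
Your proof is correct and follows essentially the same route as the paper's. The only cosmetic differences are that the paper anchors the orbit at $(0,1,1)$ rather than $S_t$, that your matrix $A_c$ is precisely the matrix $T_c$ already introduced in \eqref{eq:definition_T_c}, and that your direct computation $M_{g}(0,1,1)^T = S_{2g}^T$ is the content of Lemma~\ref{lem:SgP} applied with $P=(0,c,c)$.
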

For each $c\in K^*$ and $f\in K[t]$, define 
\begin{equation}
\label{eq:definition_T_c}
T_c =
      \begin{pmatrix}
       1 & 0 & 0 \\
       0 & (c + c^{-1})/2 &  (c - c^{-1})/2  \\
       0 & (c - c^{-1})/2 &  (c + c^{-1})/2   \\
      \end{pmatrix}
\end{equation}
and
\begin{equation*}
R_f= \left(\begin{array}{rrr}
-1 & -2 f & 2 f \\
-2 f & -2 f^{2} + 1 & 2 f^{2} \\
-2 f & -2 f^{2} & 2 f^{2} + 1
\end{array}\right).
\end{equation*}
It is straightforward to see that $T_c$ and $R_f$ preserve the form $\Q(x, y, z)$.
\begin{theorem}\label{thm:orthogonal_generator}
  The group $O_{\Q}(K[t])$ is generated by the following set:
    \[
      \{ R_f \mid f \in K[t] \} 
      \cup 
      \left\{
P_{xy}
      \right\}
      \cup
      \left\{ 
      T_c \mid c\in K^*
      \right\} 
    \]
    where $P_{xy}$ is the permutation $(x, y, z) \mapsto (y, x, z)$.
\end{theorem}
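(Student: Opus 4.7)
The plan is to prove Theorem~\ref{thm:orthogonal_generator} by an orbit--stabilizer argument with base point $e = (0, 1, 1)^T$. Writing $G$ for the subgroup of $O_{\Q}(K[t])$ in question, I would establish two facts: $(i)$ $\mathrm{Stab}_{O_{\Q}(K[t])}(e) \subseteq G$, and $(ii)$ $G$ acts transitively on the set of all primitive Pythagorean triples. These combine via a standard argument: for $g \in O_{\Q}(K[t])$, the vector $ge$ is a primitive Pythagorean triple, so $(ii)$ produces $h \in G$ with $he = ge$; then $h^{-1}g$ stabilizes $e$ and is thus in $G$ by $(i)$, forcing $g \in G$.

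For $(i)$, I would complete $e$ to a Witt basis by the null vector $e' = (1, 0, -1)^T$ with $B(e, e') = 1$ (where $B$ is the bilinear form of $\Q$) and the anisotropic vector $v = (-1, 1, 1)^T$ with $\Q(v) = 1$ orthogonal to both $e$ and $e'$. In this basis a direct parametrization identifies the stabilizer of $e$ as $(K[t], +) \rtimes \{\pm 1\}$. The key observation is that the reflection axis $(1, f, f)^T$ of $R_f$ is $B$-orthogonal to $e$ for every $f \in K[t]$, so $R_f$ automatically stabilizes $e$; matching coefficients shows $R_f$ realizes the stabilizer element of parameters $(2 + 2f, -1)$, and varying $f$ over $K[t]$ already exhausts the entire stabilizer.

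For $(ii)$, I would argue case-by-case on the shape of the target $Q$. The special triples $(0, c, \pm c)$ and $(c, 0, \pm c)$ with $c \in K^*$ are reached from $e$ by combinations of $T_c$, $P_{xy}$ and the sign matrix $\mathrm{diag}(1, 1, -1) = (P_{xy} R_0 P_{xy})\, T_{-1} \in G$. Any primitive triple that fails to be an SPT is brought to SPT form by a bounded number of operations in $G$: one swaps via $P_{xy}$ to ensure $\deg x \leq \deg y$, uses a reflection $R_f$ with constant $f \in K$ to kill the leading coefficient of $x$ when $\deg x = \deg y = \deg z$, and applies $\mathrm{diag}(1, 1, -1)$ when the leading coefficients of $y$ and $z$ are of opposite sign; the Pythagorean relation forces exactly the leading-coefficient constraints needed for this reduction to succeed. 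For an SPT $Q$ with $x \neq 0$, Theorem~\ref{thm:main_theorem_tree} provides the unique decomposition $Q^T = c M_{f_1}\cdots M_{f_k} S_f^T$. Using the identities $M_f = R_f \cdot (P_{xy} R_0 P_{xy})$ and $R_{f/2}(P_{xy} R_0 P_{xy})\, e = S_f$, one sees that the matrix part of this decomposition is an element of $G$ applied to $e$, up to the overall scalar $c$.

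The main obstacle is accounting for this scalar, since $G$ contains no matrix $c \cdot I$ for generic $c \in K^*$. The resolution is the eigenvalue identity $T_c e = c e$: the base point $e$ is the $c$-eigenvector of the hyperbolic rotation $T_c$. Setting $h := M_{f_1}\cdots M_{f_k}\, R_{f/2}\, (P_{xy} R_0 P_{xy})\, T_c \in G$, one computes $he = c\, M_{f_1}\cdots M_{f_k}\, S_f = Q$, completing $(ii)$ and hence Theorem~\ref{thm:orthogonal_generator}.
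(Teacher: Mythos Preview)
Your proof follows the same orbit--stabilizer strategy as the paper: fix the base point $e=(0,1,1)^T$, show that the stabilizer of $e$ is generated by the $R_f$ (the paper's Proposition~\ref{thm:stabilizer_011}, which it proves by a direct matrix computation rather than your Witt-basis parametrization), and show that the proposed generators already act transitively on primitive triples by reducing to an SPT and invoking Theorem~\ref{thm:main_theorem_tree} together with the identities $M_f=R_f\,(P_{xy}R_0P_{xy})$ and $T_c e=(0,c,c)^T$. The ingredients and the logical skeleton coincide; your Witt-basis description of the stabilizer and your eigenvector phrasing $T_c e = ce$ are pleasant variants of the same facts the paper uses.

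Two small points to tighten. First, in your stabilizer step you write that $R_f$ has parameters $(2+2f,-1)$ and that ``varying $f$ over $K[t]$ already exhausts the entire stabilizer''; but with sign fixed at $-1$ the set $\{R_f\}$ only hits half of $(K[t],+)\rtimes\{\pm1\}$---it \emph{generates} the whole stabilizer (via products $R_aR_b$), which is what you actually need for $(i)$. Second, your non-SPT-to-SPT reduction explicitly treats types (II), (III), and the sign issue in type (I), but not type~(IV) ($\deg z<\deg x=\deg y$); the same constant-$f$ reflection trick works there (solve $-\ell(x)-2f\,\ell(y)=0$), and the paper handles all four cases uniformly in Lemma~\ref{lem:non_SPT_Rf} via $M_f^{-1}$ with $f\in K$.
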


The paper is organized as follows. In \S\ref{sec:Prelim} we present some preliminary definitions and results that will be used in the proofs of the three theorems above. The proof of Theorem~\ref{thm:main_theorem_tree} is given in
 \S\ref{sec:Tree_structure_SPT}, while the proofs of Theorems~\ref{thm:transitivity_action} and \ref{thm:orthogonal_generator} are given in \S\ref{sec:Corolaries}.

\section{Definitions and preliminary results}\label{sec:Prelim}

Recall that $K$ is a field of  characteristic $\neq 2$. Given a non-zero polynomial $f\in K[t]$, we denote by $\ell(f)$ the (nonzero) leading coefficient of $f$ and $\deg(f)$ the degree of $f$. 
We adopt the convention that $\deg(0) = -\infty$.
For a triple $Q = (x, y, z)\in K[t]^3$,  the \emph{height of $Q$} is the integer $$
h(Q)=\max\{\deg(x), \deg(y), \deg(z)\}.
$$

When $\gcd(x, y, z) = 1$, we say that $Q$ is \emph{primitive}. Given a ring $R$, recall that $A \in \GL_3(R)$  if both $A$ and $A^{-1}$ are defined over $R$. Below we record two properties of height and primitivity of triples that will be useful later.

\begin{lemma}  \label{lem:preserve_primitivity}
For a triple $Q = (x, y, z)\in K[t]^3$ and $A \in \GL_3(K[t])$, write $\ovQ^T  = (\ovx, \ovy, \ovz)^T = AQ^T$.
  Then $\gcd(x, y, z) = \gcd(\ovx, \ovy, \ovz)$.
  In particular, $Q$ is primitive if and only if $\ovQ$ is primitive.
\end{lemma}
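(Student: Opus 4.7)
The plan is to translate the statement about $\gcd$ into a statement about ideals in the principal ideal domain $K[t]$, and then exploit the fact that both $A$ and $A^{-1}$ have entries in $K[t]$.

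First, I would recall that since $K[t]$ is a PID, for any triple $(x,y,z)\in K[t]^3$ the $\gcd$ is (up to a unit, normalized to be monic by the footnote convention) the unique monic generator of the ideal $(x,y,z)\subseteq K[t]$. Hence it suffices to prove the equality of ideals
\[
(x,y,z) \;=\; (\ovx, \ovy, \ovz).
\]

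Next, writing $A = (a_{ij})$ with $a_{ij}\in K[t]$, each coordinate $\ovx, \ovy, \ovz$ is by definition a $K[t]$-linear combination of $x,y,z$:
\[
\ovx = a_{11}x + a_{12}y + a_{13}z,\quad \ovy = a_{21}x + a_{22}y + a_{23}z,\quad \ovz = a_{31}x + a_{32}y + a_{33}z.
\]
This immediately yields $(\ovx,\ovy,\ovz)\subseteq (x,y,z)$. The key input of the hypothesis $A\in\GL_3(K[t])$ is that $A^{-1}$ also has entries in $K[t]$, so applying the same argument to $A^{-1}$ and the relation $Q^T = A^{-1}\ovQ^T$ gives the reverse inclusion $(x,y,z)\subseteq (\ovx,\ovy,\ovz)$. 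Combining these yields equality of ideals, hence equality of the monic generators, which proves $\gcd(x,y,z)=\gcd(\ovx,\ovy,\ovz)$. The "in particular" clause then follows since primitivity is the condition $\gcd = 1$.

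There is really no substantive obstacle here; the only thing to be careful about is invoking the $\GL_3(K[t])$ hypothesis (rather than mere invertibility over $K(t)$) so that the reverse inclusion is valid. This is precisely what ensures the two ideals agree and not merely that one is a $K(t)$-scalar multiple of the other.
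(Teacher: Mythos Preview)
Your proof is correct and follows essentially the same approach as the paper: the paper argues that $\gcd(x,y,z)$ divides each $K[t]$-linear combination of $x,y,z$, hence divides $\gcd(\ovx,\ovy,\ovz)$, and then reverses the roles using $A^{-1}\in\GL_3(K[t])$. Your reformulation in terms of the equality of ideals $(x,y,z)=(\ovx,\ovy,\ovz)$ is just the ideal-theoretic phrasing of the same divisibility argument.
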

\begin{proof}
Write
$f = \gcd(x, y, z)$ and $\ovf = \gcd(\ovx, \ovy, \ovz)$.
Then $f$ divides any $K[t]$-linear combination of $x, y, z$. Therefore, $f$ divides each of $\ovx$, $\ovy$, $\ovz$, thus $\ovf$ as well.
Apply the same argument with $A^{-1}$ to show that $\ovf$ divides $f$.
So we conclude that $f = \ovf$, which clearly implies that $A$ preserves primitivity.
\end{proof}
\begin{lemma}\label{lem:map_over_K_preserve_height}
For a triple $Q = (x, y, z)\in K[t]^3$ and $A \in \GL_3(K)$,
 $$
 h(Q) = h(AQ^T).
 $$
\end{lemma}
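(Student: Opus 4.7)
The plan is to show the two inequalities $h(AQ^T) \le h(Q)$ and $h(Q) \le h(AQ^T)$ separately, using in a crucial way that the entries of $A$ lie in the constant field $K$ rather than in $K[t]$.

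First I would write out $AQ^T$ component-wise. If $A = (a_{ij})$ with $a_{ij} \in K$, then the $i$-th component of $AQ^T$ is the $K$-linear combination $a_{i1}x + a_{i2}y + a_{i3}z$. Since scaling a polynomial by an element of $K^*$ does not change its degree (or the element is $0$, which makes the term disappear), the degree of this combination is at most $\max\{\deg x, \deg y, \deg z\} = h(Q)$. Taking the maximum over $i$ yields $h(AQ^T) \le h(Q)$.

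For the reverse inequality I would use that $A \in \GL_3(K)$ means $A^{-1}$ also has entries in $K$. Writing $\ovQ^T = AQ^T$, we have $Q^T = A^{-1}\ovQ^T$, and applying the same argument with $A^{-1}$ in place of $A$ gives $h(Q) \le h(\ovQ) = h(AQ^T)$. Combining the two inequalities yields the claim.

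There is essentially no obstacle; the proof is entirely routine. The only point to be careful about is precisely why the argument fails when $A$ is allowed to have entries in $K[t]$: multiplication by a polynomial of positive degree can raise the degree of each coordinate, so the degree-preservation conclusion genuinely requires $A \in \GL_3(K)$ (together with the automatic fact that $A^{-1}$ is then also in $\GL_3(K)$, which would not be free if we only assumed $A \in \GL_3(K[t])$).
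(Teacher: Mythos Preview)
Your proposal is correct and follows essentially the same approach as the paper: bound $h(AQ^T)\le h(Q)$ by observing that each coordinate of $AQ^T$ is a $K$-linear combination of $x,y,z$, then apply the same bound with $A^{-1}$ to obtain the reverse inequality. The paper's proof is slightly terser but the logic is identical.
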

\begin{proof}
  As before, write $Q = (x, y, z)$ and $\ovQ^T  = (\ovx, \ovy, \ovz)^T = AQ^T$.
  Then the degree of any $K$-linear combination of $x, y, z$ cannot exceed $h(Q)$, therefore $h(\ovQ) \le h(Q)$. 
  Apply the same argument with $A^{-1}$, which would give $h(Q) \le h(\ovQ)$.
  This completes the proof.
\end{proof}

To find analogues over $K[t]$ of the matrices $N_1, N_2, N_3$  in \eqref{eq:Berggren_matrices}, 
we first contextualize their construction using a geometric interpretation that first appeared in Conrad's note \cite{Conrad} and that was later generalized by \cite{CNT}.  Since this construction works simultaneously for both $\zz$ and $K[t]$, we briefly consider the more general framework of  an integral domain $D$ of characteristic $\neq 2$ and its fraction field $F$. 

We view  the quadratic form 
\begin{equation}\label{eq:pyth_quadform}
\Q(x, y, z) = x^2 + y^2 - z^2 
\end{equation}
associated to \eqref{eq:pyt} as being defined over $D$. The orthogonal group $O_{\Q}(D)$ of $\Q$ over $D$ is, by definition, the group of matrices $A\in GL_3(D)$ satisfying $\Q(A\mathbf{x}) = \Q(\mathbf{x})$, for all $\mathbf{x}\in D^3$. 

Note that $\Q$ defines the bilinear pairing $\langle , \rangle: F^3 \times F^3 \longrightarrow F$
\begin{equation*}
  \label{eq:Berggren_bilinear_pairing}
  \langle \mathbf{x}, \mathbf{y} \rangle = 
  \frac{1}{2}
  \left( 
    \Q(\mathbf{x} + \mathbf{y})  - \Q(\mathbf{x}) - \Q(\mathbf{y})
\right),
\end{equation*}
 for $\mathbf{x}, \mathbf{y} \in F^3$. For $\mathbf{w}\in F^3$ with $\Q(\mathbf{w}) \neq 0$, we define a \emph{reflection} $R_{\mathbf{w}}$ with respect to $\mathbf{w}$  to be the linear map from $F^3$ onto itself given by
\begin{equation}
  R_{\mathbf{w}}(\mathbf{x})
  =
  \mathbf{x}
  -
  2\frac{ \langle \mathbf{x}, \mathbf{w}\rangle}{\Q(\mathbf{w})}\mathbf{w}.
  \label{eq:Berggren_reflection_definition}
\end{equation}
The map $R_{\mathbf{w}}$  is easily seen to have order 2 and to be an element of $O_{\Q}(F)$. If $\mathbf{w} \in F^3$ is such that $\Q(\mathbf{w})=\pm1,\pm2$, then $R_{\mathbf{w}}$ is actually an element of $O_{\Q}(D)$.

When we specialize to the case $D=\zz$, $F=\qq$, $\mathbf{w}=(1,1,1)$,  we may regard $R_{\mathbf{w}}$ as a $3\times 3$ matrix with respect to the standard basis of $\qq^3$. Under this point of view, for $d = 1, 2, 3$,
the matrices $N_d$ in \eqref{eq:Berggren_matrices}   are given by 
\begin{equation*}
N_d = R_{\mathbf{w}} U_d
  \label{eq:Berggren_construction_Md}
\end{equation*}
 where $U_1, U_2, U_3$ are defined by
\begin{equation}
  U_1
  =
  \begin{pmatrix}
    1 & 0 & 0 \\
    0 & -1 & 0 \\
    0 & 0 & 1
  \end{pmatrix},
  \quad
  U_2 =
  \begin{pmatrix}
 -1 & 0 & 0 \\
 0 & -1 & 0\\
 0 & 0 & 1
 \end{pmatrix},
 \quad
  U_3 =
  \begin{pmatrix}
 -1 & 0 & 0 \\
 0 & 1 & 0 \\
 0 & 0 & 1
 \end{pmatrix}.
  \label{eq:def_Berggren_Ud}
\end{equation}

Considering the case where $D=K[t]$, $F=K(t)$ 
and $\mathbf{w}=(1,f,f)$, for some $f\in K[t]$, we denote by $R_f$ the reflection with respect to $\mathbf{w}$ using the formula \eqref{eq:Berggren_reflection_definition}.
The matrix representation of $R_f$ with respect to the standard basis of $K(t)^3$ is
\begin{equation}\label{EqDefRt}
R_f= \left(\begin{array}{rrr}
-1 & -2 f & 2 f \\
-2 f & -2 f^{2} + 1 & 2 f^{2} \\
-2 f & -2 f^{2} & 2 f^{2} + 1
\end{array}\right).
\end{equation}
Observe that $R_f$ is the matrix appearing in Theorem \ref{thm:orthogonal_generator}, while
the matrix  $M_f$ appearing in the statement of Theorem \ref{thm:main_theorem_tree} is given by the product
\begin{equation}  \label{eq:definition_M_f}
 R_fU_1=M_f=\left(\begin{array}{rrr}
-1 & 2 f & 2 f \\
-2 f & 2 f^{2} - 1 & 2 f^{2} \\
-2 f & 2 f^{2} & 2 f^{2} + 1
\end{array}\right).
\end{equation}

Since $\Q(1,f, f) = 1$, we see that both $R_f$ and $M_f$ are elements of $O_{\Q}(K[t])$, with $R_f$ having order 2.
We record here $M_f^{-1}$ for future use:
\begin{equation}\label{eq:rmininv}
M_f^{-1}= U_1
  R_f
  =
\left(\begin{array}{rrr}
-1 & -2 f & 2 f \\
2 f & 2 f^{2} - 1 & -2 f^{2} \\
-2 f & -2 f^{2} & 2 f^{2} + 1
\end{array}\right).
\end{equation}

The matrix $R_f$ also satisfies the following identities. 
\begin{lemma}\label{prop:r+identity}
  For $a, b\in K[t]$, we have
  \[
    \begin{cases}
      R_{a}R_{b} = R_{a - b}R_0,\\
    R_aR_0R_b = R_{a + b}.
  \end{cases}
  \]
\end{lemma}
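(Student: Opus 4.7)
The two identities are equivalent, so only one needs to be proved. Since $R_b$ is a reflection in a vector $\mathbf{w}_b$ with $\Q(\mathbf{w}_b)=1\neq 0$, we have $R_b^2 = I$. Multiplying the first identity $R_a R_b = R_{a-b} R_0$ on the right by $R_b$ yields $R_a = R_{a-b} R_0 R_b$, and relabeling $a \mapsto a + b$ gives the second identity $R_{a+b} = R_a R_0 R_b$. So it suffices to establish the first.

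My plan is to verify this by direct matrix multiplication using the explicit formula \eqref{EqDefRt}. A useful simplification is that $R_0 = \mathrm{diag}(-1,1,1) = U_3$, so right-multiplication by $R_0$ simply negates the first column; hence $R_{a-b}R_0$ is obtained from the matrix $R_{a-b}$ by flipping the sign of its first column. One then checks the nine entries: for instance, the $(1,1)$-entry of $R_aR_b$ is $(-1)(-1) + (-2a)(-2b) + (2a)(-2b) = 1$, matching the $(1,1)$-entry of $R_{a-b}R_0$; the $(1,2)$-entry is $2b + 4ab^2 - 2a - 4ab^2 = -2(a-b)$, matching; the $(2,2)$ entry expands as $4ab + (-2a^2+1)(-2b^2+1) + 2a^2 \cdot (-2b^2) = 1 - 2(a-b)^2$; and so on. All nine entries reduce after routine polynomial cancellation.

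A more conceptual alternative, which avoids writing out the full matrix product, exploits the observation that $\mathbf{w}_f = (1,f,f) = \mathbf{w}_0 + f\mathbf{u}$ where $\mathbf{u} := (0,1,1)$ satisfies $\Q(\mathbf{u}) = 0$ and $\langle \mathbf{u}, \mathbf{w}_f\rangle = 0$ for every $f \in K[t]$. Thus every reflection $R_f$ fixes $\mathbf{u}$ pointwise and preserves the linear functional $(x,y,z) \mapsto y - z$. Writing $\mu = y - z$ and tracking the pairing $\lambda = x + f\mu = \langle (x,y,z),\mathbf{w}_f\rangle$ at each stage, the computation of $R_a R_0 R_b(x,y,z)$ collapses to $(-x - 2(a+b)\mu, \, y - 2(b\lambda_b + a\lambda'), \, z - 2(b\lambda_b + a\lambda'))$, and the identity $R_{a+b} = R_aR_0R_b$ reduces to the single algebraic check $b\lambda_b + a\lambda' = (a+b)\lambda_{a+b}$, i.e.\ $(a+b)^2 = a^2 + 2ab + b^2$.

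There is no genuine obstacle here: the main concern is purely bookkeeping --- keeping signs straight in a $3 \times 3$ product. The conceptual route above is the one I would actually write down, because it makes the appearance of the sum $a+b$ and the difference $a-b$ transparent rather than an accident of arithmetic.
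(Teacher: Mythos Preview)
Your proposal is correct and matches the paper's own proof, which simply states that both equations are easily verified by direct computation. Your additional observations---that the two identities are equivalent via $R_b^2=I$, and the conceptual explanation via the isotropic vector $\mathbf{u}=(0,1,1)$---go beyond what the paper provides but are sound and illuminating.
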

\begin{proof}
  Both of these equations are easily verified by direct computation. 
\end{proof}

  As defined in the introduction, a \emph{standard Pythagorean triple (\SPT)} is a primitive Pythagorean triple $(x,y,z)$ satisfying  $\deg x<\deg y=\deg z$ and $\ell(y) =\ell(z)$. \SPT's play the role over $K[t]$  of a positive primitive integral Pythagorean triple. In the classical setting, every   primitive integral Pythagorean triple can be obtained from a positive one by an element of $O_{\Q}(\zz)$; namely, a change of sign. The  next two results are used to show that, similar to the integral case, any non-standard primitive Pythagorean triple can be obtained from \SPT's via  multiplication by an element of $O_{\Q}(K[t])$. 

\begin{lemma}\label{lem:non_SPT}
Suppose that $Q = (x, y, z) \in K[t]^3$ is a Pythagorean triple but not an \SPT. 
Then $Q$ is one of the following types:
 \begin{itemize}
 \item[{(I)}] $\deg x<\deg y=\deg z$ and $\ell(y)=-\ell(z)$.
 \item[{(II)}] $\deg y<\deg x=\deg z$.
 \item[{(III)}] $\deg x=\deg y=\deg z$.
 \item[{(IV)}] $\deg z < \deg x = \deg y$. In this case, $K$ must contain a square root of $-1$, say, $i = \sqrt{-1}$, and  $\ell(x)=\pm i\ell(y)$.
\end{itemize}
\end{lemma}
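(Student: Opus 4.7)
The plan is a case analysis on the relative sizes of $\deg x$, $\deg y$, $\deg z$, driven by the identity $\deg(x^2+y^2) = \deg(z^2) = 2\deg z$ together with the behavior of leading coefficients.  I assume $Q$ is primitive, which appears to be implicit: otherwise a triple such as $(0,c,c)$ with $c \in K[t]\setminus K$ would satisfy the SPT degree and leading-coefficient conditions without being primitive, and would fit none of (I)--(IV).

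First I would split on the comparison between $\deg x$ and $\deg y$.  If $\deg x < \deg y$, then the $y^{2}$ term dominates the sum $x^{2}+y^{2}$, so $\deg z = \deg y$ and $\ell(z)^{2} = \ell(y)^{2}$, forcing $\ell(y) = \pm \ell(z)$.  The $+$ sign makes $Q$ an SPT and is excluded by hypothesis, leaving $\ell(y) = -\ell(z)$, which is Type (I).  The symmetric case $\deg y < \deg x$ yields $\deg z = \deg x$, which is Type (II); no further leading-coefficient condition is required in the statement, so there is nothing else to check.

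The remaining case is $\deg x = \deg y = n$.  Here I would examine whether the top coefficients of $x^{2}$ and $y^{2}$ cancel.  If $\ell(x)^{2} + \ell(y)^{2} \neq 0$, then $\deg(x^{2}+y^{2}) = 2n$ and hence $\deg z = n$, producing Type (III).  Otherwise $\ell(x)^{2} = -\ell(y)^{2}$, so $-(\ell(x)/\ell(y))^{2} = 1$; hence $K$ contains a square root of $-1$, say $i$, and $\ell(x) = \pm i\,\ell(y)$.  The cancellation then forces $\deg(x^{2}+y^{2}) < 2n$, so $\deg z < n$, matching Type (IV).

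The whole argument is essentially bookkeeping, and the four cases are visibly exhaustive and mutually exclusive (once the SPT shape is removed from the $\deg x < \deg y = \deg z$ case).  The only step requiring any genuine observation is the last one, where the algebraic condition for leading-term cancellation, $\ell(x)^{2}+\ell(y)^{2}=0$, is identified with the existence of a square root of $-1$ in $K$; beyond that, each case reduces to a one-line degree count.
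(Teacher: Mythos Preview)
Your proof is correct and follows precisely the approach the paper itself indicates---a case analysis on degrees and leading coefficients in $x^2+y^2=z^2$---while supplying the details the paper leaves to the reader. Your remark that primitivity must be assumed (so that the case $\deg x<\deg y=\deg z$ with $\ell(y)=\ell(z)$ is genuinely excluded as an SPT) is a valid observation about the statement; the paper only ever invokes the lemma for primitive triples, so no harm is done downstream, but the hypothesis is indeed implicit.
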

\begin{proof}
The proof follows easily by comparing the degrees and leading coefficients of the polynomials appearing in both sides of the equation \eqref{eq:pyt}. We leave the details to the reader.
\end{proof}

\begin{lemma}\label{lem:NonPPTtoPPT}
If $Q$ is a primitive Pythagorean triple that is not an \PPT\  and $f\in K[t]\backslash K$  then $R_fQ$ is an \PPT.
\end{lemma}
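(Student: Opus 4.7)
The plan is to leverage the fact that $R_f$ preserves the difference $z - y$. Setting $u := z - y$, a direct expansion of \eqref{EqDefRt} gives, for $\bar Q = (\bar x, \bar y, \bar z) := R_f Q$,
\[
\bar x = -x + 2f u, \qquad \bar y = -2fx + y + 2f^2 u, \qquad \bar z = -2fx + z + 2f^2 u,
\]
so in particular $\bar z - \bar y = u$. The key reduction is then to show that $\deg u = h(Q)$ whenever $Q$ is a primitive Pythagorean triple that is not an SPT; once this is in hand, the SPT conditions for $\bar Q$ will follow from a routine degree count, since $f \in K[t] \setminus K$ forces $\deg f \geq 1$.

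To obtain the identity $\deg u = h(Q)$, I would run through the four types of non-SPT listed in Lemma~\ref{lem:non_SPT}. Types (I), (II), and (IV) are immediate from the degree and leading-coefficient data built into each type: in (I) the hypothesis $\ell(y) = -\ell(z)$ yields $\ell(u) = 2\ell(z) \neq 0$; in (II) and (IV) only one of $y$, $z$ attains the height $n := h(Q)$, so $\ell(u)$ equals $\pm$ that leading coefficient. Type (III) requires one extra observation: equating leading terms in $x^2 + y^2 = z^2$ gives $\ell(x)^2 = (\ell(z) - \ell(y))(\ell(z) + \ell(y))$, and since $\ell(x) \neq 0$, this forces $\ell(z) \neq \ell(y)$, so again $\deg u = n$.

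With $\deg u = n$ and $\deg f \geq 1$ in hand, the proof finishes by a degree count. The summand $2f^2 u$ has degree $2 \deg f + n$, strictly exceeding $\deg(2fx) \leq \deg f + n$ as well as $\deg y, \deg z \leq n$; it therefore dominates $\bar y$ and $\bar z$, contributing the common leading coefficient $2\ell(f)^2 \ell(u)$ to each. Similarly, $2fu$ dominates $\bar x$, giving $\deg \bar x = \deg f + n < 2\deg f + n = \deg \bar y = \deg \bar z$. Primitivity of $\bar Q$ is automatic from Lemma~\ref{lem:preserve_primitivity} applied to $R_f \in \GL_3(K[t])$. These three facts together verify the definition of an SPT.

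The only mild obstacle I foresee is Type~(III), where the non-cancellation $\ell(z) \neq \ell(y)$ has to be deduced from the Pythagorean relation on leading terms; the other three cases are forced directly by the hypotheses encoded in Lemma~\ref{lem:non_SPT}.
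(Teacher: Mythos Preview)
Your proposal is correct and follows essentially the same approach as the paper: both arguments set $d = z - y$ (your $u$), verify via the case split of Lemma~\ref{lem:non_SPT} that $\deg d = h(Q)$ (with type~(III) handled by the leading-coefficient identity $\ell(x)^2 + \ell(y)^2 = \ell(z)^2$), and then read off the SPT conditions for $R_fQ$ from the dominant term $2f^2 d$ in $\ovy$ and $\ovz$. The only cosmetic difference is that you make the appeal to Lemma~\ref{lem:preserve_primitivity} for primitivity explicit, whereas the paper leaves it implicit.
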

\begin{proof}
 Write $Q=(x,y,z)$. 
 Then $Q$ is one of the type (I)--(IV) in Lemma~\ref{lem:non_SPT}.
 Let $d = z - y$.
 We claim that $d$ is a non-zero polynomial with $\deg d = \max\{\deg y,\deg z\} = h(Q)$.
 This claim is obvious if $\deg z \neq \deg y$ (type (II) or (IV)). 
 Also, if $Q$ is type (I), then clearly $\deg d = \deg y = \deg z$. 
 In case (III), we must have $\ell(x)^2+\ell(y)^2=\ell(z)^2$, which implies $\ell(y)\neq\pm \ell(z)$ (because $\ell(x)$ is nonzero). 
 Therefore, the claim is proved.
 
If $\begin{pmatrix}
 \ovx,\ovy,\ovz
\end{pmatrix}^T=R_f
\begin{pmatrix}
 x,y,z
\end{pmatrix}^T$ then \eqref{EqDefRt} gives
\begin{equation}\label{eq:rplus_of_f}
\begin{aligned}
 \ovx  &=   -x +2 f d,\\
 \ovy&=-2 f x  +2 f^{2}d +  y=(\ovx-x)f+y, \\
 \ovz&=-2 f x  +2 f^{2}d +  z=(\ovx-x)f+z.
\end{aligned}
 \end{equation}
As a consequence,
$$
\deg \ovx=\deg d+\deg f<\deg d+2\deg f=\deg \ovy=\deg \ovz
$$
and  $\ell(\ovy)=\ell(\ovz)=\ell(2f^2d)$. 
\end{proof}

We end this section of preliminary results with a lemma relating the height of an SPT $Q$ with that of $M_fQ^T$, for some $f\in K[t]\backslash K$.
\begin{lemma}\label{lem:RfPPT}
 If $Q$ is an \PPT\ then, for all $f\in K[t]\backslash K$, $M_fQ^T=(\ovx,\ovy,\ovz)^T$ is an \PPT\ with $\ovx\neq 0$.
 Furthermore,
 \[
h(M_fQ^T) = 2\deg f + h(Q). 
 \]
\end{lemma}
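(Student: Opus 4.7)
The plan is to prove the lemma by direct computation: expand the three entries of $M_f Q^T$ and extract their degrees and leading coefficients using the SPT hypothesis on $Q$. First, because the matrix $M_f$ lies in $O_\Q(K[t])$ and in $\GL_3(K[t])$ (its inverse is displayed in \eqref{eq:rmininv}), the triple $\ovQ^T := M_f Q^T = (\ovx,\ovy,\ovz)^T$ is again a Pythagorean triple over $K[t]$, and by Lemma~\ref{lem:preserve_primitivity} it remains primitive.

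Write $Q = (x,y,z)$ with $d := \deg y = \deg z > \deg x$ and $\ell(y) = \ell(z)$, and set $n := \deg f \geq 1$. Then the definition \eqref{eq:definition_M_f} of $M_f$ gives
\begin{equation*}
\ovx = -x + 2f(y+z), \quad \ovy = -2fx + 2f^2(y+z) - y, \quad \ovz = -2fx + 2f^2(y+z) + z.
\end{equation*}
The critical observation is that $y+z$ has leading coefficient $2\ell(y)$, which is non-zero because $\car K \neq 2$, so $\deg(y+z) = d$. Plugging this in, the term $2f(y+z)$ strictly dominates $-x$ in $\ovx$ (since $n + d > \deg x$), yielding $\deg \ovx = n+d$; and $2f^2(y+z)$ strictly dominates $-2fx$ and $\pm y, \pm z$ in $\ovy$ and $\ovz$ (since $n \geq 1$ makes $2n+d > n + \deg x$ and $2n+d > d$), giving $\deg \ovy = \deg \ovz = 2n+d$ with $\ell(\ovy) = \ell(\ovz) = 4\ell(f)^2\ell(y)$.

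From this everything follows at once: $\ovx \neq 0$ because its degree is $n+d \geq 1$; the triple $\ovQ$ is an SPT because $\deg \ovx < \deg \ovy = \deg \ovz$ and $\ell(\ovy) = \ell(\ovz)$; and $h(\ovQ) = 2n + d = 2\deg f + h(Q)$. There is no real obstacle in this argument — everything reduces to careful bookkeeping of degrees. The only things that must be verified are that the candidate leading terms $2f(y+z)$ and $2f^2(y+z)$ do not vanish, which is exactly the content of $\ell(y+z) = 2\ell(y) \neq 0$, and that all subordinate terms have strictly smaller degree, which follows from $\deg x < d$ combined with $n \geq 1$.
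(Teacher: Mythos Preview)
Your proof is correct and follows essentially the same approach as the paper: expand the three entries of $M_fQ^T$ via \eqref{eq:definition_M_f} and read off the degrees and leading coefficients from the dominant term $2f^2(y+z)$ (resp.\ $2f(y+z)$). If anything, you are more careful than the paper in explicitly invoking Lemma~\ref{lem:preserve_primitivity} for primitivity and in noting that $\ell(y+z)=2\ell(y)\neq 0$ relies on $\mathrm{char}\,K\neq 2$.
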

\begin{proof}
 Write $Q=(x,y,z)$ and let $\begin{pmatrix}
 \ovx,\ovy,\ovz
\end{pmatrix}^T=M_f
\begin{pmatrix}
 x,y,z
\end{pmatrix}^T$. Then \eqref{eq:definition_M_f} implies
\begin{equation}\label{eq:rminus_of_f}
\begin{array}{ccl}
 \ovx  &=&   -x +2 f (y+z)\\
 \ovy&=&-2 f x  +2 f^{2}(y+z) -  y \\
 \ovz&=&-2 f x  +2 f^{2}(y+z) +  z
\end{array}
 \end{equation}
 Because $Q$ is an \PPT\ and $f$ is non-constant, by analyzing degrees we can see that the leading terms from both $\ovy$ and $\ovz$ come from the polynomial $2 f^{2}(y+z)$, while the leading term from $\ovx$ comes from $2 f (y+z)$. From that, it follows easily that $M_fQ^T$ is  an \PPT\ with $\ovx\neq 0$ and $h(M_fQ^T) = 2\deg f + h(Q)$.
\end{proof}
\section{Proof of Theorem \ref{thm:main_theorem_tree}}
\label{sec:Tree_structure_SPT}

In this section, we use infinite descent to prove the existence of a product representation of an \SPT\ as given in  Theorem \ref{thm:main_theorem_tree}. Its proof in Corollary \ref{cor:infinite_desc}  is a consequence of the next two propositions. The proof of the uniqueness of the representation in Theorem \ref{thm:main_theorem_tree} is found in Corollary \ref{cor:uniqueness}.

\begin{prop}\label{prop:descent}
  Let $Q=(x,y,z)$ be an \PPT\ with $x \neq 0$ and $\deg z\neq 2\deg x$. Then there exists $f\in K[t]\backslash K$ such that
$\ovQ^T=\begin{pmatrix}
 \ovx,\ovy,\ovz
\end{pmatrix}^T =M_f^{-1}Q^T$ is an \PPT\ with $\ovx\neq0$ and $h(\ovQ)<h(Q)$.
\end{prop}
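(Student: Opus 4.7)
The plan is to pick $f$ by polynomial division of $x$ by $2(z-y)$, and then deduce the \SPT\ property of $\tilde Q$ from the Pythagorean factorization rather than by grinding through the rows of $M_f^{-1}$.

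First I would set $d:=z-y$ and factor the Pythagorean identity as $x^{2}=(z-y)(z+y)=d(z+y)$. Because $Q$ is an \SPT\ with $x\neq 0$, the polynomial $d$ is nonzero, while $\deg(z+y)=\deg z$ with $\ell(z+y)=2\ell(z)\neq 0$, so $\deg d=2\deg x-\deg z$. The hypothesis $\deg z\neq 2\deg x$ then forces $\deg d>0$, placing $d$ in $K[t]\setminus K$.

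Next, divide $x$ by $2d$ (legitimate since $\car K\neq 2$) to write $x=2fd-\tilde x$ with $\deg\tilde x<\deg d$. The quotient has $\deg f=\deg x-\deg d=\deg z-\deg x\geq 1$, so $f\in K[t]\setminus K$. I set $\tilde Q^T=M_f^{-1}Q^T$. From \eqref{eq:rmininv} the first coordinate is $-x+2fd=\tilde x$, and summing the last two rows of $M_f^{-1}$ gives $\tilde y+\tilde z=z-y=d$. Since $M_f^{-1}\in O_{\Q}(K[t])$, $\tilde Q$ is Pythagorean, so $\tilde x^{2}=(\tilde z-\tilde y)(\tilde z+\tilde y)=(\tilde z-\tilde y)\,d$. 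Combined with $\deg\tilde x<\deg d$ and $d\neq 0$, this forces $\deg(\tilde z-\tilde y)<\deg d$, so $\tilde y$ and $\tilde z$ both have degree $\deg d$ with common leading coefficient $\ell(d)/2$. Lemma~\ref{lem:preserve_primitivity} gives primitivity of $\tilde Q$, so $\tilde Q$ is an \SPT\ with $h(\tilde Q)=\deg d<\deg z=h(Q)$.

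Finally, to rule out $\tilde x=0$: in that case $\tilde y=\tilde z$, and primitivity of $(0,\tilde y,\tilde y)$ forces $\tilde y\in K^{*}$; then $Q=M_f\tilde Q^T$ is a scalar multiple of $(4f,4f^{2}-1,4f^{2}+1)$, giving $\deg z=2\deg f=2\deg x$, contradicting the hypothesis. The only potential obstacle is verifying the \SPT\ condition $\ell(\tilde y)=\ell(\tilde z)$; chasing leading coefficients through the explicit formulas for $\tilde y,\tilde z$ looks painful, but the Pythagorean factorization $\tilde x^{2}=(\tilde z-\tilde y)d$ delivers it in one line.
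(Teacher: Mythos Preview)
Your proof is correct and follows the same overall architecture as the paper's: set $d=z-y$, choose $f$, apply $M_f^{-1}$, and use the Pythagorean factorization together with $\tilde y+\tilde z=d$ to read off degrees and leading coefficients. The one genuine difference is in how $f$ is produced. The paper divides $z$ by $x$, writing $z=fx+k$ with $\deg k<\deg x$, and then needs an auxiliary identity $\tilde x\,x=d(d-2k)$ to deduce $\deg\tilde x<\deg d$. You instead divide $x$ by $2d$, so that $\deg\tilde x<\deg d$ is immediate from the division algorithm itself. (In fact the two choices of $f$ coincide, since both satisfy $\deg(x-2fd)<\deg d$ and the quotient is unique, but your argument does not need to know this.) Your route is slightly shorter; the paper's route has the minor advantage that $f\notin K$ is visible at once from $\deg z>\deg x$, whereas you must first establish $\deg d>0$ before the division makes sense. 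Your endgame for $\tilde x\neq 0$---computing $M_f(0,c,c)^T$ and contradicting $\deg z\neq 2\deg x$---is also different from the paper's (which derives $d=2$ and contradicts $\deg d>0$), but both are valid.
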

\begin{proof}
Let $f\in K[t]$ satisfy $z=fx+k$ with $\deg k<\deg x$. Notice that $f\not\in K$ since $\deg z>\deg x$.

From \eqref{eq:rmininv}, it follows that
\begin{eqnarray*}
 \ovx  &=&   -x -2 f y + 2 f z\\
 \ovy&=&2 f x + (2 f^{2} - 1) y -2 f^{2} z\\
 \ovz&=&-2 f x  -2 f^{2} y + (2 f^{2} + 1) z.
\end{eqnarray*}
If we let $d=z-y$ then the above expressions can be simplified into
\begin{eqnarray}
\label{eq:bx}\ovx &=& -x+2fd\\
\label{eq:by}\ovy&=&-\ovz+d
\end{eqnarray}
Moreover, since $(\ovx,\ovy,\ovz)$ is a Pythagorean triple, we have that
\begin{equation}\label{eq:barxD}
\ovx^2+d^2=2\ovz d. 
\end{equation}
 We also observe that \eqref{eq:pyt} yields
\begin{equation}\label{eq:pytD}
x^2=d(z+y). 
\end{equation}
From this last equality,  the definition of $f$ and $k$, and \eqref{eq:bx} we obtain
\begin{equation}\label{eq:bxDk}
\ovx x=-d(z+y)+2(z-k)d=d(d-2k). 
\end{equation}

Because $(x,y,z)$ is an \PPT\ with $x\neq 0$,  we conclude that $d\neq 0$ and, by \eqref{eq:pytD},
\begin{equation}\label{eq:degD}
\deg d= 2\deg x-\deg z.
\end{equation}


Since $(x,y,z)$ is an \PPT\ with $\deg z\neq 2\deg x$ then
 \eqref{eq:degD} and $\deg x<\deg z$ show that  
 \begin{equation}\label{eq:degdx}
0<\deg d<\deg x.     
 \end{equation}
Thus, $\deg(d-2k)<\deg x$ and, by equating degrees in \eqref{eq:bxDk}, we arrive at
$$
\deg \ovx+\deg x=\deg d+\deg(d-2k)<\deg d+\deg x.
$$
This shows that $\deg \ovx<\deg d$. Consequently,  \eqref{eq:barxD} proves that $\deg \ovz=\deg d$ and  $2\ell(\ovz)=\ell(d)$. Together with \eqref{eq:by}, we conclude that $\ell(\ovy)=\ell(\ovz)$ and that $\deg \ovy=\deg \ovz=\deg d>\deg \ovx$, proving  that $(\ovx,\ovy,\ovz)$ is an \PPT. 

If we assume that $\ovx=0$, then $\ovz=\ovy$ and 
$$
\ovz=\gcd(\ovx,\ovy,\ovz)=\gcd(x,y,z)=1.
$$ 
Moreover, \eqref{eq:barxD} shows that $1=\ovz=d/2$. Since this equality contradicts \eqref{eq:degdx}, we conclude that $\ovx\neq 0$. To finish our proof, notice that $Q^T=M_f\ovQ^T$ and Lemma \ref{lem:RfPPT} imply that $h(\ovQ)<h(Q)$.
\end{proof}

\begin{prop}\label{prop:degz2degx}
  Let $Q=(x,y,z)$ be an \PPT\ with $x \neq 0$ and  $\deg z= 2\deg x$. Then there exist $c\in K^*$ and  $f\in K[t]\backslash K$ such that
$Q=cS_f$, for $S_f$ as in \eqref{eq:St}.
\begin{proof}
We let $e=\deg x$ and $2e=\deg y=\deg z$. Using the euclidean algorithm, we can find $a\in K^*$ and $b\in K[t]$ satisfying $y=ax^2+b$ and $\deg b<2e$. And since $y$ and $z$ have the same leading coefficient, we have that $z=ax^2+\beta$, for some $\beta \in K[t]$ with $\deg \beta<2e$. From \eqref{eq:pyt}, we arrive at
\begin{equation}\label{eq:bbeta}
x^2(1+2a(b-\beta))=\beta^2-b^2. 
\end{equation}

If $b=0$ or $\beta=0$ then \eqref{eq:bbeta} implies that $\gcd(x,y,z)\neq 1$. Therefore, we assume that $\beta$ and $b$ are non-zero.

We show that $\beta^2-b^2= 0$. If we assume otherwise, then $(\beta-b)\neq 0$,  $(\beta+b)\neq 0$ and, by equating degrees in \eqref{eq:bbeta},
$$
2e=\deg(\beta+b)\leq \max\{\deg b,\deg \beta\}<2e.
$$
Also from \eqref{eq:bbeta} we  see that $\beta\neq b$, since $x\neq 0$. Therefore $b=-\beta$ and, consequently, \eqref{eq:bbeta} implies that
$$
1+4ab=0.
$$
This proves that
$$
Q=\left(x,ax^2-\frac{1}{4a},ax^2+\frac{1}{4a}\right).
$$
The result follows by taking $f=2ax$ and $c=1/4a$.
\end{proof}
\end{prop}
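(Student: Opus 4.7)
The plan is to exploit the factorization $(z-y)(z+y)=x^2$ of \eqref{eq:pyt} and combine it with the \SPT\ degree information to pin down $z-y$. Set $e:=\deg x$, so by hypothesis $\deg y=\deg z=2e$; the \SPT\ condition $\ell(y)=\ell(z)$ gives $\deg(z-y)<2e$ while $\deg(z+y)=2e$ (note $z+y\neq0$, else $\ell(y)=-\ell(z)$ would contradict $\ell(y)=\ell(z)$ since $\car K\neq 2$). Since $x\neq0$, the factored identity shows $z-y\neq0$, and comparing degrees forces $\deg(z-y)+2e=2e$, so $\gamma:=z-y$ is a non-zero element of $K^*$.

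Next I would substitute $z=y+\gamma$ back into \eqref{eq:pyt} and solve for $y$, obtaining
\[
y=\frac{x^{2}-\gamma^{2}}{2\gamma},\qquad z=\frac{x^{2}+\gamma^{2}}{2\gamma},
\]
both of which are elements of $K[t]$ since $\gamma\in K^{*}$ and $\car(K)\neq 2$. Setting $c:=\gamma/2\in K^{*}$ and $f:=x/\gamma$, a direct computation gives $cS_{f}=c(2f,\,f^{2}-1,\,f^{2}+1)=(x,y,z)=Q$. Finally $f\in K[t]\setminus K$ because the strict inequality $\deg x<\deg y=2\deg x$ built into the \SPT\ definition forces $e\geq 1$, so $x/\gamma$ is non-constant.

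The main obstacle I anticipate is the opening degree-counting step that collapses $z-y$ down to a non-zero constant; once that is done the triple is rigidly determined by $x$ and the single scalar $\gamma$, and recognising it as a scaled $S_{f}$ is just bookkeeping. It is worth noting that this approach does not invoke primitivity of $Q$ at all — the \SPT\ degree and leading-coefficient conditions together with the extra hypothesis $\deg z=2\deg x$ already suffice to force the conclusion.
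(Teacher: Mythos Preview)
Your proof is correct and, in fact, cleaner than the paper's. Both arguments aim to show that $z-y$ is a nonzero constant, but they reach it by different routes. The paper performs Euclidean division of $y$ and $z$ by $x^{2}$, writing $y=ax^{2}+b$ and $z=ax^{2}+\beta$ with $\deg b,\deg\beta<2e$, then substitutes into \eqref{eq:pyt} to obtain $x^{2}(1+2a(b-\beta))=\beta^{2}-b^{2}$ and argues by degree comparison (invoking primitivity along the way to rule out $b=0$ or $\beta=0$) that $\beta=-b$ is a nonzero constant. You instead go straight to the factorization $x^{2}=(z-y)(z+y)$, read off $\deg(z+y)=2e$ from $\ell(y)=\ell(z)$ and $\car K\neq 2$, and conclude $\deg(z-y)=0$ in one line.

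What your approach buys is brevity and a sharper statement: as you observe, the primitivity built into the \SPT\ definition is never used, so the conclusion holds for any Pythagorean triple satisfying the degree and leading-coefficient conditions of an \SPT\ together with $x\neq 0$ and $\deg z=2\deg x$. The paper's approach, by contrast, introduces auxiliary quantities $a,b,\beta$ whose roles are less transparent, and it leans on $\gcd(x,y,z)=1$ in a step that your argument shows to be dispensable. One very minor presentational point: your parenthetical about $z+y\neq 0$ is stated as the reason $\deg(z+y)=2e$, but the real reason is that $\ell(y)+\ell(z)=2\ell(y)\neq 0$ in characteristic $\neq 2$, so the leading terms survive the sum; this is implicit in what you wrote and not a gap.
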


\begin{corollary}\label{cor:infinite_desc}
 Let $Q=(x,y,z)$ be an SPT with $x\neq 0$. 
Then there exist a  sequence $\{f,f_1, \dots f_k \}$ in $K[t] \backslash K$ and $c\in K^*$ such that
\[
Q^T = cM_{f_1} \cdots M_{f_k} S_f^T.
\]
\end{corollary}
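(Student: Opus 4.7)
The plan is to prove the existence claim by induction on the height $h(Q)$, or equivalently by infinite descent, using Propositions \ref{prop:descent} and \ref{prop:degz2degx} as the two building blocks. The idea is that Proposition \ref{prop:degz2degx} supplies the base case $Q = c S_f$ (corresponding to the empty sequence $k = 0$), while Proposition \ref{prop:descent} provides the recursive step, peeling off one factor $M_{f_i}$ at a time.

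Concretely, given an SPT $Q = (x,y,z)$ with $x \neq 0$, consider the dichotomy based on whether $\deg z = 2\deg x$. If $\deg z = 2\deg x$, then Proposition \ref{prop:degz2degx} immediately yields $c \in K^*$ and $f \in K[t] \setminus K$ with $Q = c S_f$, and we are done. Otherwise, Proposition \ref{prop:descent} produces some $f_1 \in K[t] \setminus K$ such that $\widetilde{Q}^T := M_{f_1}^{-1} Q^T$ is again an SPT with first coordinate nonzero and with $h(\widetilde{Q}) < h(Q)$. Applying the induction hypothesis to $\widetilde{Q}$ gives a representation $\widetilde{Q}^T = c M_{f_2} \cdots M_{f_k} S_f^T$, and left-multiplying by $M_{f_1}$ yields the desired expression $Q^T = c M_{f_1} M_{f_2} \cdots M_{f_k} S_f^T$.

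The only thing that requires a moment of reflection is termination of the descent. Since $h$ takes values in the non-negative integers and strictly decreases at each descent step, the process must halt after finitely many iterations. Moreover, the descent step of Proposition \ref{prop:descent} can fail only in two ways: either the first coordinate of the current SPT becomes $0$, or its $z$- and $x$-degrees satisfy $\deg z = 2\deg x$. The first possibility is explicitly ruled out by the conclusion of Proposition \ref{prop:descent} (every intermediate triple has nonzero first coordinate), so the descent can terminate only at a triple for which Proposition \ref{prop:degz2degx} applies, giving the base case of the induction.

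There is no real obstacle here beyond ensuring that the two propositions glue together consistently: Proposition \ref{prop:descent} guarantees that the descent preserves the class of SPTs with $x \neq 0$ while strictly reducing height, and Proposition \ref{prop:degz2degx} guarantees that the unique possible stopping configuration is of the form $cS_f$. The uniqueness assertion of Theorem \ref{thm:main_theorem_tree} is deferred to Corollary \ref{cor:uniqueness} and is not needed here.
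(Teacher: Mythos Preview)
Your proposal is correct and follows essentially the same approach as the paper's own proof: both argue by infinite descent on the height, invoking Proposition~\ref{prop:descent} to strictly lower the height while preserving the SPT property and the condition $x\neq 0$, and invoking Proposition~\ref{prop:degz2degx} once the descent is forced to halt at a triple with $\deg z = 2\deg x$. The paper unrolls the descent explicitly as a sequence $Q_0, Q_1, \dots, Q_k$ rather than phrasing it as induction, but the content is identical.
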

\begin{proof}
If $Q=(x,y,z)$ satisfies $\deg z=2\deg x$ then $Q=cS_f$, where $c\in K^*$ and $f\in K[t]$ are given by
Proposition \ref{prop:degz2degx}. 

Thus we may assume that $\deg z\neq 2\deg x$. According to Proposition \ref{prop:descent}, there exists $f_1\in K[t]\backslash K$ such that $Q_1^T=(x_1,y_1,z_1)^T=M_{f_1}^{-1}Q^T$ is an \SPT\ with $x_1\neq 0$ and $h(Q_1)<h(Q)$. If $\deg z_1=2\deg x_1$ then, from Proposition \ref{prop:degz2degx} we find  $c\in K^*$ and $f\in K[t]$ such that
$$
Q^T=cM_{f_1}S_f^T,
$$
and we are done.  If $\deg z_1\neq 2\deg x_1$, then we can use Proposition \ref{prop:descent} to construct $Q_2^T=(x_2,y_2,z_2)^T=M_{f_2}^{-1}Q_1^T$, for some $f_2\in K[t]\backslash K$, such that $x_2\neq 0$ and 
$$
h(Q_2)<h(Q_1)<h(Q).
$$

Again, either $\deg z_2=2\deg x_2$ or $\deg z_2\neq2\deg x_2$. In the first case, we are finished because Proposition \ref{prop:degz2degx} implies
$$
Q^T=cM_{f_2}M_{f_1}S_f^T.
$$
Otherwise, we can use Proposition \ref{prop:descent} to construct an \SPT\ $Q_3^T=(x_3,y_3,z_3)^T=M_{f_3}^{-1}Q_2^T$ with $x_3\neq 0$ and 
$$
h(Q_3)<h(Q_2)<h(Q_1)<h(Q).
$$

In this fashion, for $Q_0=Q$, $i\geq 1$ and  some sequence  $\{f_1, \dots, f_i \}$ in $K[t] \backslash K$ we can construct a recursive sequence of \SPT's 
$$
(x_i,y_i,z_i)^T=Q_i^T=M_{f_i}^{-1}Q_{i-1}^T
$$
with $x_i\neq 0$ and
$$
h(Q_i)<\cdots <h(Q_2)<h(Q_1)<h(Q)
$$
Since $h(Q_i)$ is a positive integer for all $i$, the above inequality shows that we can not continue this construction indefinitely. Therefore, there exists an integer $k\geq 1$ such that $Q_k^T=(x_k,y_k,z_k)^T=M_{f_k}^{-1}Q_{k-1}^T$ is a \SPT\ with $x_k\neq 0$ and $\deg z_k=2\deg x_k$. Therefore, Proposition \ref{prop:degz2degx} guarantees the existence of $c\in K^*$ and $f\in K[t]$ such that
$$
Q_k=cS_f,
$$
 The result follows by noticing that
$$
cS_f^T=Q_k^T=M_{f_k}^{-1}Q_{k-1}^T=M_{f_{k}}^{-1}M_{f_{k-1}}^{-1}Q_{k-2}^T=\cdots =M_{f_{k}}^{-1}M_{f_{k-1}}^{-1}\cdots M_{f_1}^{-1}Q^T.
$$
\end{proof}

The next series of results are used to prove the uniqueness of the product representation
\[
Q^T = cM_{f_1} \cdots M_{f_k} S_f^T
\]
of any \SPT\ $Q$ with $x\neq 0$.

\begin{lemma}\label{lem:RpeqRq}
  Let $P$ and $Q$ be \PPT's and $g$ and $h$ be polynomials in $K[t]\backslash K$. If
  $$
  M_gP^T=M_hQ^T
  $$
  then $P=Q$ and $g=h$.
\end{lemma}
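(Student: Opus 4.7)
The plan is to show that both $g$ and $P$ are uniquely determined by $M_g P^T$. Write $P = (x_P, y_P, z_P)$, $Q = (x_Q, y_Q, z_Q)$, and denote the common value $M_g P^T = M_h Q^T$ by $(\bar x, \bar y, \bar z)^T$. Applying formula \eqref{eq:rminus_of_f} to both sides, my first step is to observe the \emph{coordinate-free} identity
\[
\bar z - \bar y = y_P + z_P = y_Q + z_Q,
\]
which I will call $D$. This isolates a polynomial that depends only on the target triple, not on the choice of $g$ vs.\ $h$. Next, reading off the first coordinate gives $\bar x = -x_P + 2gD = -x_Q + 2hD$, whence
\[
x_Q - x_P = 2(h-g)D.
\]

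The main case is $\deg D \geq 1$. Here I use the \SPT\ hypothesis in two ways: first, since $\ell(y_P) = \ell(z_P) \neq 0$, the leading coefficients of $y_P$ and $z_P$ add, so $\deg D = \deg y_P = h(P)$; second, $\deg x_P, \deg x_Q < h(P) = \deg D$. Therefore $\deg(x_Q - x_P) < \deg D$, while if $g \neq h$ the right side has degree at least $\deg D$. This forces $g = h$, and then the invertibility of $M_g$ (which holds because $\Q(1, g, g) = 1$) gives $P = Q$.

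The remaining edge case, $\deg D = 0$, is the one I expect to be the main subtlety, since the degree-comparison argument degenerates. In this case $\deg y_P = 0$, and together with $\ell(y_P) = \ell(z_P)$ and $\deg x_P < 0$ the \SPT\ and Pythagorean conditions force $P = (0, c, c)$ for some $c \in K^*$; similarly $Q = (0, c', c')$. Then $2c = D = 2c'$, giving $c = c'$ and hence $P = Q$ directly; comparing $\bar x = 4gc = 4hc$ with $c \neq 0$ yields $g = h$.

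In summary, I extract $D = \bar z - \bar y$ from the target, then split on whether $\deg D \geq 1$ (handled by a short degree inequality using the \SPT\ condition $\deg x_P < \deg y_P$ and $\ell(y_P) = \ell(z_P)$) or $\deg D = 0$ (where primitivity pins down $P$ and $Q$ explicitly as $(0,c,c)$). The only nontrivial point is recognizing that $\bar z - \bar y$ kills all dependence on $g$ so that the leftover equation in $\bar x$ reduces to a clean degree comparison.
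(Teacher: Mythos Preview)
Your proof is correct and is genuinely different from the paper's argument. The paper proceeds by setting $f = g - h$, using the reflection identity $R_aR_b = R_{a-b}R_0$ (Lemma~\ref{prop:r+identity}) to rewrite the hypothesis as $U_1P^T = R_f R_0 U_1 Q^T$, and then invoking the type classification of Lemma~\ref{lem:non_SPT} together with Lemma~\ref{lem:NonPPTtoPPT} to rule out first $f\notin K$ and then $f\in K^*$ by contradiction. Your approach instead extracts the invariant $D = \bar z - \bar y = y_P + z_P = y_Q + z_Q$ directly from the explicit formula \eqref{eq:rminus_of_f} and finishes with a single degree comparison, bypassing the reflection identities and the type machinery entirely. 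Your route is shorter and more self-contained; the paper's route has the advantage of reusing the structural lemmas already in place, which keeps the exposition uniform. One minor slip: from $-x_P + 2gD = -x_Q + 2hD$ you get $x_Q - x_P = 2(g-h)D$, not $2(h-g)D$, but this sign is irrelevant to the degree argument.
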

\begin{proof}
Write $f=g-h$. Clearly, it is enough to prove that $f=0$.
 
Use Lemma~\ref{prop:r+identity}, together with the fact that $R_f$ is of order 2
to obtain
\begin{equation*}%
U_1P^T=R_fR_0U_1Q^T. 
\end{equation*}
We rewrite this equation as 
\begin{equation}\label{eq:RgeqRh}
    \begin{pmatrix}
 \ovx,\ovy,\ovz
\end{pmatrix}^T=R_f
\begin{pmatrix}
 x,y,z
\end{pmatrix}^T
\end{equation}
where 
$(\ovx,\ovy,\ovz)^T=U_1P^T,
$ and 
$(x,y,z)^T=R_0U_1Q^T.
$

In what follows, we use notation from the proof of Lemma \ref{lem:NonPPTtoPPT}.
Since $P$ is an \PPT,  $(\ovx,\ovy,\ovz)$ is a Pythagorean triple of type (I). If we assume that $f\notin K$, Lemma \ref{lem:NonPPTtoPPT} would imply that the right-hand side of \eqref{eq:RgeqRh} is an \PPT. Therefore, $f\in K$.

Notice that $(x,y,z)$ is also a Pythagorean triple of type (I). Thus, if $f\in K^*$ then \eqref{eq:rplus_of_f} implies that $\deg x<\deg \ovx=\deg (z-y)=\deg z=\deg y$. Additionally, \eqref{eq:rplus_of_f} implies that $\ell(\ovy)=\ell(\ovx)f-\ell(z)$ and $\ell(\ovz)=\ell(\ovx)f+\ell(z)$. This shows that $\ell(\ovy)$ and $\ell(\ovz)$ cannot  be both zero; consequently, either $\deg \ovy=\deg z$ or $\deg \ovz=\deg z$. Therefore, $\deg \ovx=\deg \ovy$ or $\deg \ovx=\deg \ovz$. Since this contradicts the fact that $(\ovx,\ovy,\ovz)$ is a Pythagorean triple of type (I), we have that $f=g-h=0$, as desired.
\end{proof}

\begin{prop}\label{prop:equalproduct}
 Let $P$ and $Q$ be \PPT's and $m\geq n$ be positive integers. Suppose that   there are  sequences $\{g_0,g_1,\ldots,g_m\}$ and $\{h_0,h_1,\ldots,h_n\}$ in $K[t]\backslash K$ such that
 $$
 M_{g_m}M_{g_{m-1}}\cdots M_{g_1}P^T=M_{h_n}M_{h_{n-1}}\cdots M_{h_1} Q^T.
 $$
 Then either $P=Q$, $m=n$ and $g_i=h_i$; or $m>n$, $Q^T=M_{g_{m-n}}\cdots M_{g_1}P^T$ and $g_{m-n+i}=h_{i}$ for $1\leq i\leq n$.
\end{prop}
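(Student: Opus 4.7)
The plan is to proceed by induction on $n$, repeatedly applying Lemma~\ref{lem:RpeqRq} to peel off the outermost matrix on each side.

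The essential preliminary observation is that every partial product of the form $M_{g_j}\cdots M_{g_1}P^T$ and $M_{h_j}\cdots M_{h_1}Q^T$ is (the transpose of) an \SPT: since $P$ and $Q$ are \SPT's, Lemma~\ref{lem:RfPPT} applied once shows that $M_{g_1}P^T$ is an \SPT, and then iterating yields the claim for all larger $j$. This matters because the hypothesis of Lemma~\ref{lem:RpeqRq} requires both sides to be of the shape $M_{g}\cdot(\text{\SPT})$ with $g\in K[t]\setminus K$.

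For the base case $n=1$, the equation is $M_{g_m}M_{g_{m-1}}\cdots M_{g_1}P^T=M_{h_1}Q^T$. If $m=1$, Lemma~\ref{lem:RpeqRq} gives $P=Q$ and $g_1=h_1$ directly. If $m>1$, set $P':=M_{g_{m-1}}\cdots M_{g_1}P^T$, which is (the transpose of) an \SPT\ by the remark above; the equation becomes $M_{g_m}P'=M_{h_1}Q^T$, and Lemma~\ref{lem:RpeqRq} yields $g_m=h_1$ together with $Q^T=M_{g_{m-1}}\cdots M_{g_1}P^T$, which is precisely the ``$m>n$'' conclusion with $n=1$.

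For the inductive step, assume the statement for $n$ and consider the equation with $n+1$ in place of $n$ and $m\geq n+1$. Peeling off the outermost factor on each side, using that both $M_{g_{m-1}}\cdots M_{g_1}P^T$ and $M_{h_n}\cdots M_{h_1}Q^T$ are \SPT's, Lemma~\ref{lem:RpeqRq} produces
\[
g_m=h_{n+1}\quad\text{and}\quad M_{g_{m-1}}\cdots M_{g_1}P^T=M_{h_n}\cdots M_{h_1}Q^T,
\]
with $m-1\geq n$. Invoking the induction hypothesis on this shorter identity gives either $P=Q$ with $m-1=n$ and $g_i=h_i$ for $1\leq i\leq n$, or $m-1>n$ with $Q^T=M_{g_{m-1-n}}\cdots M_{g_1}P^T$ and $g_{(m-1-n)+i}=h_i$ for $1\leq i\leq n$. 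Tacking on the identity $g_m=h_{n+1}$ promotes these two alternatives to the corresponding dichotomy for $n+1$, completing the induction.

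I do not foresee any real obstacle; the only delicate points are verifying that all intermediate triples remain \SPT's (handled by Lemma~\ref{lem:RfPPT}) and keeping careful track of the index shifts when combining the peeled-off equality $g_m=h_{n+1}$ with the inductive conclusion.
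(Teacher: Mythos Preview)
Your proof is correct and follows essentially the same approach as the paper: both arguments peel off the outermost $M$-factor using Lemma~\ref{lem:RpeqRq}, after invoking Lemma~\ref{lem:RfPPT} to guarantee that the intermediate products remain \SPT's. The only difference is organizational---the paper first proves the case $m=n$ by induction on $n$ and then reduces the case $m>n$ to it by absorbing $M_{g_{m-n}}\cdots M_{g_1}P^T$ into a new \SPT\ $P'$, whereas you run a single induction on $n$ that handles both alternatives simultaneously; the content is the same.
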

\begin{proof}
Assume first that $m=n$. We prove, by induction on $n$, that 
 $$
 M_{g_n}M_{g_{n-1}}\cdots M_{g_1}P^T=M_{h_n}M_{h_{n-1}}\cdots M_{h_1} Q^T
 $$
implies $P=Q$ and $g_i=h_i$, for all $1\leq i\leq n$. The base case of induction is Lemma \ref{lem:RpeqRq}. By Lemma \ref{lem:RfPPT}, $\bar{P}^T=M_{g_{n-1}}\cdots M_{g_1}P^T$ and $\bar{Q}^T=M_{h_{n-1}}\cdots M_{h_1} Q^T$ are \PPT's. By assumption, they satisfy
$$
M_{g_n}\bar{P}^T=M_{h_n}\bar{Q}^T.
$$
Another application of Lemma \ref{lem:RpeqRq} implies that $g_n=h_n$ and $\bar{P}=\bar{Q}$. Therefore, the induction hypothesis implies that $P=Q$ and $g_i=h_i$ for all $1\leq i\leq n$.

Suppose $m>n$.  If $P'^T=M_{g_{m-n}}M_{g_{j-1}}\cdots M_{g_1}P^T$ then, by hypothesis,
$$
 M_{g_m}M_{g_{m-1}}\cdots M_{g_{m-n+1}}{P'}^T=M_{h_n}M_{h_{n-1}}\cdots M_{h_1} Q^T.
$$
Notice that there are $n$ matrices in both sides of the previous equation. Therefore, we can apply the $m=n$ case which had been proved above to arrive at our desired result.
\end{proof}

\begin{lemma}\label{lem:SgP}
 Let $P$ be an \SPT, $c\in K^*$ and $f$ and $g$ be polynomials in $K[t]\backslash K$. Then
 $$
cS_g^T=M_fP^T
 $$
if and only if $g=2f$ and $P=(0,c,c)$.
\end{lemma}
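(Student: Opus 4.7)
The converse is a direct verification: substituting $P = (0,c,c)$ and $g = 2f$ into both sides of the claimed identity reduces each to $c(4f,\,4f^2-1,\,4f^2+1)^T$. So I focus on the forward direction. Write $P = (x,y,z)$ and expand $M_fP^T$ using the explicit form of $M_f$ in \eqref{eq:definition_M_f}; equating this with $cS_g^T = c(2g,\,g^2-1,\,g^2+1)^T$ coordinatewise yields three polynomial identities.

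The key observation is to subtract the second coordinate from the third, which cancels the $-2fx$ and $2f^2$ contributions and leaves the clean relation $y + z = 2c$. Because $P$ is an \SPT, we have $\deg y = \deg z$ and $\ell(y) = \ell(z)$; since these leading coefficients are equal (not opposite), they add rather than cancel in $y+z$, so if $y$ were nonconstant then $y+z$ would have positive degree, contradicting $y+z = 2c \in K$. Hence $y$ and $z$ are both constants with the same value, forcing $y = z = c$.

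Now the Pythagorean relation $x^2 + y^2 = z^2$ collapses to $x^2 = 0$, so $x = 0$ and $P = (0,c,c)$. Substituting this into the first coordinate equation $2cg = -x + 2f(y+z)$ gives $2cg = 4cf$, and dividing by $2c \neq 0$ yields $g = 2f$, as required. I expect no real obstacle here; the only point that warrants care is the degree argument showing $y+z = 2c$ forces $y$ and $z$ to be constants, and this rests squarely on the \SPT\ hypothesis that the leading coefficients of $y$ and $z$ agree in sign.
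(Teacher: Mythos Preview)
Your proof is correct and follows essentially the same approach as the paper: both arguments subtract the second coordinate equation from the third to obtain $y+z=2c$, then use the \SPT\ condition to force $y=z=c$ and $x=0$, and finally read off $g=2f$ from the first coordinate. You supply a bit more detail on why $y+z\in K$ forces $y,z$ constant (and your use of the Pythagorean relation to get $x=0$ is actually redundant, since $\deg x<\deg y=0$ already gives $x=0$), but the strategy is identical.
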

\begin{proof}
We write $P=(x,y,z)$. From $cS_g^T=M_fP^T$ and \eqref{eq:rminus_of_f}, we get
\begin{eqnarray*}
2cg&=& -x +2 f (y+z)\\
cg^2-c&=&-2 f x  +2 f^{2}(y+z) -  y \\
cg^2+c&=&-2 f x  +2 f^{2}(y+z) +  z
 \end{eqnarray*}
Therefore,
$$
2c=cg^2+c-(cg^2-c)=z+y.
$$
Since $P$ is an \SPT, we conclude that $z=y=c$ and $x=0$. Consequently, the first equality above  implies that $g=2f$.

The converse is proved via direct computation using \eqref{eq:rminus_of_f}.
\end{proof}

\begin{corollary}\label{cor:uniqueness}
  Let  $m$ and $n$  be positive integers. Suppose that there are  $c,d\in K^*$ and sequences $\{g_0,g_1,\ldots,g_m\}$ and $\{h_0,h_1,\ldots,h_n\}$ in $K[t]\backslash K$ such that
 $$
c M_{g_m}M_{g_{m-1}}\cdots M_{g_1}S_{g_0}^T=dM_{h_n}M_{h_{n-1}}\cdots M_{h_1} S_{h_0}^T.
 $$
 Then $m=n$, $c=d$ and $g_i=h_i$, for all $0\leq i\leq n$.
\end{corollary}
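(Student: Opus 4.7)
The plan is to combine Proposition \ref{prop:equalproduct} with Lemma \ref{lem:SgP}, viewing $cS_{g_0}$ and $dS_{h_0}$ themselves as \SPT's on which the products $M_{g_m}\cdots M_{g_1}$ and $M_{h_n}\cdots M_{h_1}$ act. The first step is a short verification that $cS_{g_0}=(2cg_0,\,c(g_0^2-1),\,c(g_0^2+1))$ really is an \SPT: the degree and leading coefficient conditions are immediate from $g_0\in K[t]\setminus K$, and primitivity follows from $\gcd(g_0^2-1,g_0^2+1)\mid 2\in K^*$; the same applies to $dS_{h_0}$. Commuting the scalars $c$ and $d$ past the matrices, the given identity becomes
\[
  M_{g_m}\cdots M_{g_1}(cS_{g_0})^T \;=\; M_{h_n}\cdots M_{h_1}(dS_{h_0})^T,
\]
to which Proposition \ref{prop:equalproduct} applies (after swapping the two sides if necessary so that $m\geq n$) with $P=cS_{g_0}$ and $Q=dS_{h_0}$.

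The first of the two cases delivered by that proposition is the easy one: $m=n$, $g_i=h_i$ for $1\leq i\leq n$, and $cS_{g_0}=dS_{h_0}$. Subtracting the second entry of this last equality from the third gives $2c=2d$, whence $c=d$, and then equality of first entries yields $g_0=h_0$. This settles the statement whenever case one occurs.

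The main obstacle, and the only step requiring a real idea, is ruling out the second case, in which $m>n$ and $(dS_{h_0})^T=M_{g_{m-n}}\cdots M_{g_1}(cS_{g_0})^T$. I plan to argue as follows: set $k=m-n\geq 1$ and
\[
  P^T \;=\; M_{g_{k-1}}\cdots M_{g_1}(cS_{g_0})^T
\]
(with the empty product being the identity when $k=1$), so that the relation becomes $dS_{h_0}^T=M_{g_k}P^T$. Since $cS_{g_0}$ has non-zero first entry $2cg_0$, iterating Lemma \ref{lem:RfPPT} shows that $P$ is an \SPT\ whose first entry is non-zero. However, Lemma \ref{lem:SgP} applied to $dS_{h_0}^T=M_{g_k}P^T$ forces $P=(0,d,d)$, contradicting the non-vanishing of its first entry. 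Hence only the first case can occur, and the corollary follows.
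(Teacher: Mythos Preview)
Your proof is correct and follows essentially the same strategy as the paper's: apply Proposition~\ref{prop:equalproduct} with $P=cS_{g_0}$ and $Q=dS_{h_0}$, then use Lemma~\ref{lem:SgP} together with the fact (from Lemma~\ref{lem:RfPPT}) that the first coordinate stays nonzero to rule out the $m>n$ case. Your explicit verification that $cS_{g_0}$ is an \SPT\ and your separate handling of the $k=1$ case are minor expository additions, but the argument is otherwise the same.
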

\begin{proof}

First, we show that $m\neq n$ is impossible. Otherwise, we may assume without loss of generality that $m>n$ and conclude from Proposition \ref{prop:equalproduct} that 
\begin{equation}\label{eq:hoP}
cS_{h_0}^T=M_h\bar{P}^T 
\end{equation}
where $h=g_{m-n}$, and $\bar{P}=dS_{g_0}$ or $\bar{P}^T=dM_{g_{m-n-1}}\cdots M_{g_1}S_{g_0}^T$. In any case,  $\bar{P}=(\bar{x},\bar{y},\bar{z})$ is an \SPT\, with $\bar{x}\neq 0$, according to Lemma \ref{lem:RfPPT}. But, given \eqref{eq:hoP}, $\bar{x}\neq 0$ contradicts the conclusion of Lemma \ref{lem:SgP}. 

Therefore, $m=n$ and Proposition  \ref{prop:equalproduct} implies that $g_i=h_i$, for all $1\leq i\leq n$, and $cS_{g_0}=dS_{h_0}$. From the last equality, it easily follows  that $c=d$ and $g_0=h_0$, finishing our proof.
\end{proof}


\section{Generators of $O_{\Q}(K[t])$}
\label{sec:Corolaries}
When $(\ovx, \ovy, \ovz)^T = R_f (x, y, z)^T$, recall from \eqref{eq:rplus_of_f} that
\begin{equation}\label{eq:rplus_of_f2}
\begin{aligned}
 \ovx  &=   -x +2 f (z - y),\\
 \ovy&=(\ovx-x)f+y, \\
 \ovz&=(\ovx-x)f+z.
\end{aligned}
\end{equation}
We will use these identities in the following two lemmas.

\begin{lemma}\label{lem:SPT_Rf}
Suppose that $Q$ is an SPT. Then $R_fQ^T$ is also an SPT for any $f\in K$. 
\end{lemma}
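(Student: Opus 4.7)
The plan is to verify the three conditions defining an SPT for $\ovQ = R_f Q^T$ in turn: (i) being a Pythagorean triple, (ii) primitivity, and (iii) the structural condition $\deg \ovx < \deg \ovy = \deg \ovz$ with $\ell(\ovy) = \ell(\ovz)$. Parts (i) and (ii) will come for free from general facts already established: $R_f$ preserves $\Q$ by construction, and since $f \in K$ we have $R_f \in \GL_3(K)$, so Lemma \ref{lem:preserve_primitivity} guarantees that primitivity carries over. All the work will therefore concentrate on the third condition.

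To handle (iii), I would use the explicit expressions in \eqref{eq:rplus_of_f2}. Writing $Q = (x,y,z)$ and $d = z - y$, the SPT hypothesis gives $\deg x < \deg y = \deg z$ and $\ell(y) = \ell(z)$, so $\deg d < \deg y$. Since $f$ is a constant, $\ovx = -x + 2fd$ satisfies $\deg \ovx \leq \max(\deg x, \deg d) < \deg y$. Plugging this into $\ovy = (\ovx - x)f + y$ and $\ovz = (\ovx - x)f + z$ and noting that the correction term $(\ovx - x)f$ has degree strictly less than $\deg y = \deg z$, I conclude that $\deg \ovy = \deg y$ with $\ell(\ovy) = \ell(y)$ and $\deg \ovz = \deg z$ with $\ell(\ovz) = \ell(z)$. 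Combining these yields exactly the SPT structure conditions for $\ovQ$.

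There is no real obstacle here; the argument is a straightforward degree count enabled precisely by the fact that $f \in K$, so that multiplying by $f$ does not increase degree. This is in sharp contrast with the non-constant case treated in Lemma \ref{lem:RfPPT}, where $M_f$ strictly increases the height by $2\deg f$. The conclusion remains meaningful in the trivial case $f = 0$, where $R_0$ simply flips the sign of $x$ and leaves $y$ and $z$ untouched, producing the SPT $(-x, y, z)$.
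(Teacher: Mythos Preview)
Your proof is correct and follows essentially the same approach as the paper's: both arguments use the explicit formulas \eqref{eq:rplus_of_f2} to bound $\deg\ovx$ below $\deg y$ and then read off the leading-term equalities for $\ovy$ and $\ovz$. The only cosmetic difference is that the paper first invokes Lemma~\ref{lem:map_over_K_preserve_height} to get $h(Q)=h(\ovQ)$ and then infers $\deg\ovy=\deg\ovz$, whereas you compute $\deg\ovy=\deg y$ and $\deg\ovz=\deg z$ directly from the formulas---your version is slightly more self-contained.
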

\begin{proof}
Write $Q = (x, y, z)$ and $\tilde{Q}^T  = (\tilde{x}, \tilde{y}, \tilde{z})^T = R_fQ^T$ for $f\in K$.
Note from Lemma~\ref{lem:map_over_K_preserve_height} that $h(Q) = h(\tilde{Q})$.
From the first equation  in \eqref{eq:rplus_of_f2}, we see that that $\deg\ovx < \deg(y) = h(Q) = h(\tilde{Q})$.
This implies that $\deg\ovy = \deg\ovz$.
Also it is obvious from the second and third equations of \eqref{eq:rplus_of_f2} that $\ell(\ovy) = \ell(\ovz)$.
This completes the proof that $\tilde{Q}$ is an SPT.
\end{proof}
\begin{lemma}\label{lem:non_SPT_Rf}
Suppose that $Q$ is a primitive Pythagorean triple, which is not an SPT. 
Then there exists $f\in K$ such that $M_f^{-1}Q^T$
is an SPT.
\end{lemma}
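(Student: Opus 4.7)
The plan is to choose $f \in K$ based on the leading coefficients of $x$, $y$, $z$, so that multiplication by $M_f^{-1}$ kills the top-degree part of the first coordinate and equalizes the top-degree parts of the other two. Let $n = h(Q)$ and write $x_0, y_0, z_0 \in K$ for the coefficients of $t^n$ in $x$, $y$, $z$ respectively, with the convention that $x_0 = 0$ if $\deg x < n$ and similarly for $y_0, z_0$. Comparing coefficients of $t^{2n}$ on both sides of $x^2 + y^2 = z^2$ yields the relation $x_0^2 + y_0^2 = z_0^2$ in $K$.

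The first non-trivial step is to argue that $z_0 \neq y_0$. If $z_0 = y_0$, then $x_0^2 = z_0^2 - y_0^2 = 0$, so $x_0 = 0$; since $n = h(Q)$, at least one of $x_0, y_0, z_0$ must be nonzero, which forces $y_0 = z_0 \neq 0$. But then $\deg x < n = \deg y = \deg z$ and $\ell(y) = \ell(z)$, so $Q$ would be an SPT, contradicting the hypothesis. (One could alternatively deduce $z_0 \neq y_0$ case by case from the four types listed in Lemma \ref{lem:non_SPT}.) This allows me to define the key quantity $f = x_0/(2(z_0 - y_0)) \in K$.

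With this choice, let $(\bar{x}, \bar{y}, \bar{z})^T = M_f^{-1}Q^T$. A direct computation using \eqref{eq:rmininv} shows that the coefficient of $t^n$ in $\bar{x}$ equals $-x_0 + 2f(z_0 - y_0) = 0$. Similarly, using the identity $2fx_0 = x_0^2/(z_0 - y_0) = z_0 + y_0$ (which is the Pythagorean relation $x_0^2 = z_0^2 - y_0^2$ rearranged), the coefficients of $t^n$ in both $\bar{y}$ and $\bar{z}$ come out to the common value $(z_0 - y_0)/2$, which is nonzero. Since $M_f^{-1} \in \GL_3(K[t])$, Lemma \ref{lem:preserve_primitivity} guarantees that $(\bar{x}, \bar{y}, \bar{z})$ remains primitive, and the degree and leading-coefficient data just extracted give $\deg \bar{x} < \deg \bar{y} = \deg \bar{z}$ together with $\ell(\bar{y}) = \ell(\bar{z})$, so $M_f^{-1}Q^T$ is an SPT. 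The only real obstacle is the clean justification that $z_0 \neq y_0$; once that is established, the remainder is a short algebraic verification driven by the Pythagorean relation on $(x_0, y_0, z_0)$.
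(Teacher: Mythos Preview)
Your proof is correct and ends up choosing the same $f$ as the paper, but the packaging is cleaner. The paper splits into the four cases (I)--(IV) of Lemma~\ref{lem:non_SPT} and prescribes $f$ separately in each; your single formula $f = x_0/(2(z_0 - y_0))$ unifies them (it specializes to the paper's choice in every case). More substantively, the paper works with $\tilde{Q}^T = R_fQ^T$ and only verifies $\deg\tilde{x} < h(\tilde{Q})$; from the Pythagorean relation it then gets $\deg\tilde{y} = \deg\tilde{z}$ and $\ell(\tilde{y}) = \pm\ell(\tilde{z})$, and has to rule out the unwanted sign by an indirect contradiction using Lemma~\ref{lem:SPT_Rf}. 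You instead compute the top coefficients of $\bar{y}$ and $\bar{z}$ explicitly as $(z_0 - y_0)/2$ via the identity $x_0^2 = z_0^2 - y_0^2$, which is shorter and makes Lemma~\ref{lem:SPT_Rf} unnecessary here. Both arguments are valid; yours is a bit more direct and self-contained.
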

\begin{proof}
Recall from Lemma~\ref{lem:non_SPT} that $Q$ is one of the type (I)--(IV).
We claim that there exists $f\in K$ such that $\tilde{Q}^T = (\ovx, \ovy, \ovz)^T := R_f Q^T$ is of type (I). 
This would imply the conclusion in the lemma because $U_1 \tilde{Q}^T = M_f^{-1} Q^T$ would then be an SPT.
Choose $f\in K$ so that
\begin{equation}\label{eq:condition_for_constant_f}
\begin{cases}
f= 0 & \text{ if $Q$ is of type (I)}, \\
-\ell(x) +2f\ell(z) = 0 & \text{ if $Q$ is of type (II)}, \\
-\ell(x) +2f(\ell(z) - \ell(y)) = 0 & \text{ if $Q$ is of type (III)}, \\
-\ell(x) -2f\ell(y) = 0 & \text{ if $Q$ is of type (IV)}.
\end{cases}
\end{equation}
We see from the first equation of \eqref{eq:rplus_of_f2} that the above equations are solvable in $f$ in all cases and that $h(Q) = h(\tilde{Q})$ because of Lemma~\ref{lem:map_over_K_preserve_height}.
Once $f$ is chosen to satisfy \eqref{eq:condition_for_constant_f}, we have $\deg\ovx < h(\tilde{Q})$,
which results in $\deg\ovy = \deg\ovz$.
This implies that either $\ell(\ovy) = -\ell(\ovz)$ (in which case the claim is proven) or $\ell(\ovy) = \ell(\ovz)$.
However, if $\ell(\ovy)= \ell(\ovz)$, this means that $\tilde{Q}$ is an SPT.
This is a contradiction because 
Lemma~\ref{lem:SPT_Rf} would then imply that 
\[
R_f \tilde{Q}^T = R_f (R_f Q)^T = Q^T
\]
is also an SPT, which we had assumed not.
\end{proof}

\begin{proof}[Proof of Theorem~\ref{thm:transitivity_action}]
We will prove that every primitive Pythagorean triple $Q$ is in the $O_{\Q}(K[t])$-orbit of $(0, 1, 1)$.

We handle the case $h(Q) = 0$.
If $Q$ is an SPT, then $Q = (0, c, c)$ for some $c\in K^*$.
Recall that $T_c$ is a matrix defined by \eqref{eq:definition_T_c}.
It is easily verified that 
\begin{equation}\label{eq:Tc}
 T_c(0, 1, 1)^T = (0, c, c)^T,
\end{equation}
so that $Q = (0, c, c)$ is in the $O_{\Q}(K[t])$-orbit of $(0, 1, 1)$. 
If $Q$ is not an SPT, we apply Lemma~\ref{lem:non_SPT_Rf} to obtain $f\in K$ such that $M_f^{-1}Q^T$ is an SPT.
Since $h(M_f^{-1}Q^T) = 0$ (cf.~Lemma~\ref{lem:map_over_K_preserve_height}) we see from the above argument that $M_f^{-1}Q^T$ is in the orbit of $(0, 1, 1)$, which of course implies that $Q$ is as well.
It remains to consider the case $h(Q) > 0$. 
Then Theorem~\ref{thm:main_theorem_tree} shows that $Q$ is in the orbit of $cS_g$, for some $c\in K^*$ and $g\in K[t]\backslash K$. But Lemma \ref{lem:SgP} implies that $cS_g$ and, thus, $Q$ are in the orbit of $(0,c,c)$,  which has already been shown to be in the orbit of $(0, 1, 1)$. 
\end{proof}
\begin{prop}\label{thm:stabilizer_011}
The stabilizer of $(0,1, 1)$ in $O_{\Q}(K[t])$ is generated by the set
\[
\{ R_f \mid f\in  K[t] \}.
\]
\end{prop}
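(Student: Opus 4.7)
The plan is to prove the proposition by a normalization-then-rigidity argument. The easy inclusion $\langle R_f : f \in K[t]\rangle \subseteq \operatorname{Stab}(0,1,1)$ is immediate: substituting $x = 0$, $y = z = 1$ into \eqref{eq:rplus_of_f2} gives $\ovx = 0$, $\ovy = 1$, $\ovz = 1$, so every $R_f$ fixes $(0,1,1)$. The real work lies in the reverse inclusion.

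For any $A$ in the stabilizer of $(0,1,1)$, I would first constrain $Ae_1$. Because $A$ preserves both $\Q$ and the associated bilinear pairing, and because $\langle e_1, (0,1,1)\rangle = 0$ and $\Q(e_1) = 1$, we have $\langle Ae_1, (0,1,1)\rangle = 0$ and $\Q(Ae_1) = 1$. Writing $Ae_1 = (\alpha, \beta, \gamma)^T$, the first condition yields $\beta = \gamma$, so $Ae_1 = (\alpha, a, a)^T$ with $\alpha^2 = 1$ in $K[t]$ and $a \in K[t]$; since $K[t]$ is a domain, $\alpha = \pm 1$. I would then absorb the first column of $A$ into a short product from $\langle R_f\rangle$. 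If $\alpha = -1$, set $B = R_{-a/2}$; since $R_f e_1 = (-1, -2f, -2f)^T$ by \eqref{EqDefRt}, this gives $Be_1 = Ae_1$. If $\alpha = 1$, set $B = R_{a/2} R_0$; using $R_0 e_1 = -e_1$ and then $R_{a/2}(-e_1) = (1, a, a)^T$, we again get $Be_1 = Ae_1$. Both constructions rely only on $\car(K) \neq 2$. Since $B$ also fixes $(0,1,1)$, the element $C := B^{-1}A$ simultaneously fixes $e_1$ and $(0,1,1)$.

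The heart of the argument is to show that such a $C$ must be the identity. Write $Ce_2 = (0, b_2, b_3)^T$ and $Ce_3 = (0, c_2, c_3)^T$; the leading zeros follow from $\langle Ce_i, Ce_1\rangle = \langle e_i, e_1\rangle = 0$ for $i = 2, 3$. From $\Q(Ce_2) = 1$ we obtain the factorization $(b_2 - b_3)(b_2 + b_3) = 1$ in $K[t]$, which, using $K[t]^{\times} = K^{\times}$, forces $b_2, b_3 \in K$; similarly $c_2, c_3 \in K$. The remaining relations $b_2 + c_2 = 1$ and $b_3 + c_3 = 1$ (from $C(0,1,1)^T = (0,1,1)^T$) together with $b_2 c_2 - b_3 c_3 = 0$ (from $\langle Ce_2, Ce_3\rangle = 0$) form a tiny linear system whose only solution is $b_2 = 1$, $b_3 = 0$, $c_2 = 0$, $c_3 = 1$. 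Hence $C = I$ and $A = B \in \langle R_f\rangle$.

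I expect the main obstacle to be this last step: the rigidity of the simultaneous stabilizer of $e_1$ and $(0,1,1)$ depends essentially on $K[t]^{\times} = K^{\times}$. All the earlier steps are routine normalizations performable over any field, but without this unit-group constraint the equation $b_2^2 - b_3^2 = 1$ would admit infinitely many solutions (for instance over $K(t)$), and the analogous statement would fail. So the distinctive feature of the proof is that the polynomial (as opposed to rational function) setting is leveraged at exactly one point, namely the rigidity calculation.
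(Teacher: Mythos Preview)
Your argument is correct and reaches the same conclusion as the paper, but is organized differently. The paper proceeds by direct computation: writing a general stabilizer element $R$ with its third column already constrained by $R(0,1,1)^T=(0,1,1)^T$, it expands the six scalar equations in $R^TJR=J$ (with $J=\mathrm{diag}(1,1,-1)$) and solves them to obtain $R=R_{-a_2/2}$ or $R=R_{-a_2/2}U_3=R_{-a_2/2}R_0$, where $a_2\in K[t]$ is arbitrary. Your normalize-then-rigidity decomposition is more conceptual and arrives at the same place. One correction to your commentary, however: the rigidity step does not in fact hinge on $K[t]^*=K^*$. Substituting $c_2=1-b_2$ and $c_3=1-b_3$ into $b_2c_2-b_3c_3=0$ gives $(b_2-b_3)-(b_2^2-b_3^2)=0$, hence $b_2-b_3=1$; combined with $b_2^2-b_3^2=1$ this yields $b_2+b_3=1$, so $b_2=1$ and $b_3=0$ over any integral domain in which $2$ is a unit. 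In particular the proposition (and both proofs) hold verbatim over $K(t)$, so your detour through ``$b_2,b_3\in K$'' is harmless but unnecessary, and the remark that the analogous statement would fail over $K(t)$ is incorrect. (Relatedly, note that the three ``remaining relations'' you list do not by themselves determine the solution; you must also use $b_2^2-b_3^2=1$, which you had already recorded.)
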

\begin{proof}
We follow Conrad's argument given in Appendix of \cite{Conrad}.
Let $\mathcal{S}_1$ be the group generated by $\{ R_f \mid f \in K[t]\}$.
A simple calculation shows that $R_f$ fixes $(0, 1, 1)$ for every $f\in K[t]$
therefore $\mathcal{S}_1$ is a subgroup of the stabilizer of $(0, 1, 1)$.
Conversely, let $R$ be in the stabilizer of $(0, 1, 1)$.
Then $R$ is of the form
\[
R = 
\begin{pmatrix}
a_1 & a_2 & -a_2 \\
a_3 & a_4 & 1-a_4 \\
a_5 & a_6 & 1-a_6 \\
\end{pmatrix}
\]
for some $a_1, \dots, a_6 \in  K[t]$.
Also, letting $J$ be the $3\times 3$ diagonal matrix whose diagonal entries are $(1, 1, -1)$, the condition that $R$ is orthogonal with respect to the quadratic form $\Q(t)$ is equivalent to
\[
R^T J R = J,
\]
which yields
\[
\begin{cases}
a_1^2 + a_3^2 - a_5^2 = 1,\\
a_2^2 + a_4^2 - a_6^2 = 1,\\
a_2^2 + (a_4 - 1)^2 - (a_6- 1)^2 = -1,
\end{cases}
\begin{cases}
a_1a_2 + a_3a_4 - a_5a_6 = 0,\\
-a_1a_2 - a_3(a_4 - 1) + a_5(a_6 - 1) = 0,\\
-a_2^2 - a_4(a_4-1) + a_6(a_6-1)= 0.
\end{cases}
\]
Solving them simultaneously, we obtain
\[
\begin{cases}
a_1^2 = 1,\\
a_3 = a_5 = -a_1a_2,\\
a_4 = 1 + a_6 = 1 - \frac{a_2^2}2.\\
\end{cases}
\]
As a result, we have
\[
R = 
\begin{pmatrix}
a_1 & a_2 & -a_2 \\
-a_1a_2 & 1 - \frac{a_2^2}2 & \frac{a_2^2}2 \\
-a_1a_2 & - \frac{a_2^2}2& 1 + \frac{a_2^2}2
\end{pmatrix}
\]
with $a_1 = 1$ or $a_1 = -1$.
Therefore, we have
\[
  R = R_{-a_2/2} \text{ or } R_{-a_2/2}U_3
\]
depending on $a_1 = 1$ or $a_1 = -1$, respectively.
Since $U_3 = R_0 \in \mathcal{S}_1$ this completes the proof.
\end{proof}

\begin{proof}[Proof of Theorem~\ref{thm:orthogonal_generator}]
Let $\mathcal{S}_2$ be the subgroup of $O_{\Q}(K[t])$ generated by the set given in the statement of the theorem.
Since
\[
U_1
=
\begin{pmatrix}
1 & 0 & 0 \\
0 & -1 & 0 \\
0 & 0 & 1
\end{pmatrix}
=
P_{xy}
\begin{pmatrix}
-1 & 0 & 0 \\
0 & 1 & 0 \\
0 & 0 & 1
\end{pmatrix}
P_{xy}
=
P_{xy}R_0P_{xy}
\]
it follows that $M_h\in \mathcal{S}_2$ for any $h\in K[t]$ (see \eqref{eq:definition_M_f}).
Suppose $A\in O_{\Q}(K[t])$.
Let $Q^T = A (0, 1, 1)^T$, which is obviously a primitive Pythagorean triple.
Next, we find $N\in \mathcal{S}_2$ such that $NQ^T$ is an SPT; if $Q$ is already an SPT, then $N = I_3$ (the identity matrix), otherwise, we choose $N$ to be $M_f^{-1}$ as constructed in Lemma~\ref{lem:non_SPT_Rf}.
Now, we apply Theorem~\ref{thm:main_theorem_tree} and Lemma \ref{lem:SgP} to obtain 
\[
\begin{pmatrix}
 0 \\ c \\ c
\end{pmatrix}
=
M^{-1}_{g/2}M_{f_1}  \cdots M_{f_k} N Q^T = 
M^{-1}_{g/2}M_{f_1}  \cdots M_{f_k} N A
\begin{pmatrix}
 0 \\ 1 \\ 1
\end{pmatrix}
\]
for some $g,f_1,  \cdots, f_k \in K[t]$ and $c\in K^*$.
Therefore we conclude from \eqref{eq:Tc} that
\[
T_c^{-1} M_{f_1} \cdots M_{f_k} N A
\]
fixes $(0, 1, 1)$.
From Proposition~\ref{thm:stabilizer_011}, it follows that the above product belongs to 
$\mathcal{S}_1$ (thus to $\mathcal{S}_2$ as well), which then implies that $A \in \mathcal{S}_2$.
\end{proof}

%
%
%

\begin{bibdiv}
\begin{biblist}
\bib{Ber34}{article}{
author={Berggren, B.},
title={Pytagoreiska triangular},
journal={Tidskrift f\"or element\"ar matematik, fysik och kemi},
volume={17},
date={1934},
pages={129--139},
}
    \bib{Bar63}{article}{
      author={Barning, F. J. M.},
      title={On Pythagorean and quasi-Pythagorean triangles and a generation
      process with the help of unimodular matrices},
      language={Dutch},
      journal={Math. Centrum Amsterdam Afd. Zuivere Wisk.},
      volume={1963},
      date={1963},
      number={ZW-011},
      pages={37},
      review={\MR{0190077}},
    }

\bib{CA90}{article}{
   author={Cass, Daniel},
   author={Arpaia, Pasquale J.},
   title={Matrix generation of Pythagorean $n$-tuples},
   journal={Proc. Amer. Math. Soc.},
   volume={109},
   date={1990},
   number={1},
   pages={1--7},
   issn={0002-9939},
   review={\MR{1000148}},
   doi={10.2307/2048355},
}

\bib{CNT}{article}{
   author={Cha, Byungchul},
   author={Nguyen, Emily},
   author={Tauber, Brandon},
   title={Quadratic forms and their Berggren trees},
   journal={J. Number Theory},
   volume={185},
   date={2018},
   pages={218--256},
   issn={0022-314X},
   review={\MR{3734349}},
   doi={10.1016/j.jnt.2017.09.003},
}

\bib{Conrad}{article}{
author={Conrad, Keith},
title={Pythagorean descent},
eprint={https://kconrad.math.uconn.edu/blurbs/linmultialg/descentPythag.pdf},
}

        \bib{Rom08}{article}{
      author={Romik, Dan},
      title={The dynamics of Pythagorean triples},
      journal={Trans. Amer. Math. Soc.},
      volume={360},
      date={2008},
      number={11},
      pages={6045--6064},
      issn={0002-9947},
      review={\MR{2425702}},
      doi={10.1090/S0002-9947-08-04467-X},
    }

\end{biblist}
\end{bibdiv}
\end{document}